\DeclareMathAlphabet{\mathcal}{OMS}{cmsy}{m}{n}
\definecolor{ao(english)}{rgb}{0.0, 0.5, 0.0}
\pgfplotsset{compat=newest}
\newtheorem{thm}{Theorem}
\crefname{thm}{Theorem}{Theorems}
\newtheorem{prop}{Proposition}
\crefname{prop}{Proposition}{Propositions}
\newtheorem{lem}{Lemma}
\crefname{lem}{Lemma}{Lemmas}
\newtheorem{cor}{Corollary}
\crefname{cor}{Corollary}{Corollaries}
\newtheorem{rem}{Remark}
\crefname{rem}{Remark}{Remark}
\crefname{ass}{Assumption}{Assumption}
\let\mathbb=\mathds
\crefname{conj}{Conjecture}{Conjectures}
\newtheorem{defn}{Definition}
\crefname{defn}{Definition}{Definitions}
\crefname{prob}{Problem}{Problems}
\crefname{algorithm}{Algorithm}{Algorithms}
\crefname{Figure}{Fig.}{Figs.}
\newenvironment{proof}{\smallbreak\noindent{\it Proof. }}{\hfill$\Box$\smallbreak}
\newcommand{\Rmn}{\mathbb{R}^{n \times m}}
\newcommand{\Rnn}{\mathbb{R}^{n \times n}}
\newcommand{\conv}{\textnormal{conv}}
\newcommand{\rk}{\textnormal{rank}}
\newcommand{\svd}{\textnormal{svd}}
\newcommand{\diag}{\textnormal{diag}}
\newcommand{\argmin}{\operatornamewithlimits{argmin}}
\newcommand{\trace}{\textnormal{trace}}
\newcommand{\minmn}{\min \{ m,n \}}
\newcommand{\cl}{\textnormal{cl}}
\newcommand{\prox}{\textnormal{prox}}
\newcommand{\opts}{\star}
\newcommand{\card}{\textnormal{card}}
\newcommand{\dom}{\textnormal{dom}}
\newcommand{\epi}{\textnormal{epi}}
\newcommand{\normrgast}[1]{\left\| #1\right \|_{g,r\ast}}
\newcommand{\normkyr}[1]{\left\| #1\right \|_{\ell_1,r}}
\newcommand{\normkyrast}[1]{\left\| #1\right \|_{\ell_{\infty},r\ast}}
\newcommand{\normA}[2]{\left\| #1\right \|_{ #2}}
\newcommand{\normAast}[2]{\left\| #1\right \|_{ #2\ast}}
\newcommand{\funcdom}{\Rmn \to \mathbb{R} \cup \lbrace \infty \rbrace}
\newcommand{\transp}{\mathsf{T}}
\colorlet{FigColor1}{blue}
\colorlet{FigColor2}{red}
\colorlet{FigColor3}{green}
\colorlet{FigColor4}{orange}
\pgfplotsset{every axis plot/.append style={line width=1.5pt}}
\newcommand{\TheTitle}{Low-Rank Inducing Norms with Optimality Interpretations}
\title{{\TheTitle} }
\author{Christian Grussler \thanks{Department of Engineering, Control Group, Cambridge University, United Kingdom \texttt{christian.grussler@eng.cam.ac.uk}.} \and Pontus Giselsson \thanks{Department of Automatic Control, Lund University, Sweden \texttt{pontusg@control.lth.se}.}}
\begin{document}
	
	\maketitle
	
	\begin{abstract}
Optimization problems with rank constraints appear in many diverse fields such as control, machine learning and image analysis. Since the rank constraint is non-convex, these problems are often approximately solved via convex relaxations. Nuclear norm regularization is the prevailing convexifying technique for dealing with these types of problem. This paper introduces a family of low-rank inducing norms and regularizers which includes the nuclear norm as a special case. A posteriori guarantees on solving an underlying rank constrained optimization problem with these convex relaxations are provided. We evaluate the performance of the low-rank inducing norms on three matrix completion problems. In all examples, the nuclear norm heuristic is outperformed by convex relaxations based on other low-rank inducing norms. For two of the problems there exist low-rank inducing norms that succeed in recovering the partially unknown matrix, while the nuclear norm fails. These low-rank inducing norms are shown to be representable as semi-definite programs. Moreover, these norms have cheaply computable proximal mappings, which makes it possible to also solve problems of large size using first-order methods. 
\end{abstract}

\section{Introduction}
Many problems in machine learning, image analysis, model order reduction, multivariate linear regression, etc.
(see~\cite{izenman1975reduced,antoulas2005approximation,candes2009exact,candes2010matrix,recht2010guaranteed,chandrasekaran2011rank,velu2013multivariate,hastie2015statistical,larsson2016convex,vidal2016generalized}), can be posed as a low-rank estimation problems based on measurements and prior information about a data matrix. These estimation problems often take the form
\begin{equation}
\begin{aligned}
& \underset{M}{\textnormal{minimize}}
& & f_0(M)\\
& \textnormal{subject to}
& & \rk(M) \leq r,
\end{aligned}
\label{eq:rank_prob_intro}
\end{equation}
where $f_0$ is a proper closed convex function and $r$ is a positive integer that specifies the desired or expected rank. Due to non-convexity of the rank constraint, a solution to \cref{eq:rank_prob_intro} is known in only a few special cases (see~e.g.~\cite{antoulas1997approximation,antoulas2005approximation,velu2013multivariate}). 

A common approach to deal with the rank constraint is to use the nuclear norm heuristic (see~\cite{fazel2001rank,recht2010guaranteed}). The idea is to convexify the problem by replacing the non-convex rank constraint with a nuclear norm regularization term. For matrix completion problems, where some elements of a low-rank matrix are known and the objective is to recover the unknown entries, the nuclear approach is known to recover the true low-rank matrix with high probability, provided that enough random measurements are available (see~\cite{candes2009exact,recht2010guaranteed,chandrasekaran2012convex} and \Cref{sec:compl}). If this assumption is not met, however, the nuclear norm heuristic may fail in producing satisfactory estimates (see~\cite{grussler2016low,grussler2016covariance}).


This paper introduces a family of low-rank inducing norms as alternatives to the nuclear norm. These norms can be interpreted as the largest convex minorizers (convex envelopes) of non-convex functions $f$ of the form
\begin{align}
f = \|\cdot\|+\chi_{\rk(\cdot)\leq r},
\label{eq:f_reg_intro}
\end{align}
where $\|\cdot\|$ is an arbitrary unitarily invariant norm, and $\chi_{\rk(\cdot)\leq r}$ is the indicator function for matrices with rank less than or equal to $r$. This interpretation motivates the use of low-rank inducing norms in convex relaxations to \cref{eq:rank_prob_intro}, especially if $f_0$ in \cref{eq:rank_prob_intro} can be split into the sum of a convex function and a unitarily invariant norm. If the solution to the convex relaxation has rank $r$, then the convex envelope and the original non-convex function coincide at the optimal point, and the convex relaxation solves the original non-convex problem~\cref{eq:rank_prob_intro}. Therefore, our interpretation provides easily verifiable \emph{a posteriori} optimality guarantees for solving~\cref{eq:rank_prob_intro} using convex relaxations based on low-rank inducing norms. We also show that the nuclear norm is obtained as the convex envelope of \cref{eq:f_reg_intro} with target rank $r=1$. This yields the novel interpretation that the nuclear norm is the convex envelope of a unitarily invariant norm restricted to rank-1 matrices. This interpretation is different to the convex envelope interpretation in~\cite{fazel2001rank}.

Within the family of low-rank inducing norms, there is a flexibility in the choice of target rank $r$ as well as the unitarily invariant norm in \cref{eq:f_reg_intro}. This work considers particularly the Frobenius- and spectral norm. We refer to the corresponding low-rank inducing norms as \emph{low-rank inducing Frobenius norms} and \emph{low-rank inducing spectral norms}, respectively. Which norm to choose may depend on the desired properties of the solution to the non-convex problem~\cref{eq:rank_prob_intro}. For example, assume that \cref{eq:rank_prob_intro} is a matrix completion problem and suppose that the unknown elements are expected to be of small magnitude compared to the known ones. Then the low-rank inducing Frobenius norm may be a suitable choice, since it penalizes the magnitude of all the unknown elements. On the other hand, if the magnitudes of the unknown entries are of comparable size to the known ones, then the low-rank inducing spectral norm might be preferable. In \Cref{sec:compl}, two examples are provided to illustrate these differences. Further modelling guidelines are outside the scope of this paper. 

However, in cases such as the Frobenius norm approximation problem, it is evident from the objective function of the non-convex problem \cref{eq:rank_prob_intro} that the low-rank inducing Frobenius norms should be used (see~\Cref{sec:main}). These norms, also called $r\ast$ norms, have been previously discussed in the literature (see \cite{bach2012optimization,eriksson2015k,McDonaldPS15,grussler2015optimal,doan2016finding,grussler2016low,grussler2016covariance}). In~\cite{bach2012optimization,eriksson2015k,McDonaldPS15,doan2016finding}, no optimality interpretations are provided, but in previous work we have presented such interpretations for the squared $r\ast$ norms (see~\cite{grussler2015optimal,grussler2016low,grussler2016covariance}). In this paper, these results are generalized to hold for all convex increasing functions of general low-rank inducing norms. Most importantly, our results hold for linear increasing functions, i.e. for the low-rank inducing norm itself. These generalizations require different proof techniques than the ones used in~\cite{grussler2015optimal,grussler2016low,grussler2016covariance}. To the best of our knowledge, no other low-rank inducing norms from the proposed family, including low-rank inducing spectral norms, have been proposed in the literature.



For the family of low-rank inducing norms to be useful in practice, they must be suitable for numerical optimization. We show that the low-rank inducing Frobenius- and spectral norms are representable as semi-definite programs (SDP). This allows us to readily formulate and solve small to medium scale problems using standard SDP-solvers (see~\cite{peaucelle2002user,toh2004implementation}). Moreover, these norms can be shown to have cheaply computable proximal mappings, comparable with the computational cost for the proximal mapping of the nuclear norm. Therefore, it is possible to solve large-scale problems involving low-rank inducing norms by means of proximal splitting methods (see~\cite{combettes2011proximal,parikh2014proximal}). 

Besides the two matrix completion problems mentioned above, the performance of different low-rank inducing norms is evaluated on a covariance completion problem. The evaluation reveals that the choice of low-rank inducing norms has tremendous impact on the ability to complete the covariance matrix. In particular, the nuclear norm is significantly outperformed by the low-rank inducing Frobenius norm, as well as the low-rank inducing spectral norm. Implementations to this work are available at \cite{grussler2018github}.


This paper is organized as follows. We start by introducing some preliminaries in~\Cref{sec:prelim}. In~\Cref{sec:norms}, we introduce the class of low-rank inducing norms, and provide optimality interpretations of these in~\Cref{sec:main}. In~\Cref{sec:comp}, computability of low-rank inducing Frobenius- and spectral norms is addressed. To support the usefulness of having more low-rank inducing norms at our supply, numerical examples are presented in~\Cref{sec:compl}.
\section{Preliminaries}
\label{sec:prelim}
The set of reals is denoted by $\mathbb{R}$, the set of real vectors by $\mathbb{R}^n$, and the set of real matrices by $\Rmn$. Element-wise nonnegative matrices $X\in\Rmn$ are denoted by \linebreak $X \in \mathbb{R}^{n \times m}_{\geq 0}$. If symmetric $X\in\mathbb{R}^{n\times n}$ is positive definite (semi-definite), we write $X \succ 0$ ($X \succeq 0$). These notations are also used to describe relations between matrices, e.g. $A \succeq B$ means $A-B \succeq 0$. The non-increasingly ordered singular values of $X \in \Rmn$, counted with multiplicity, are denoted by $\sigma_1(X) \geq \dots \geq \sigma_{\min \{ m,n \}}(X)$.  Furthermore, $$\langle X , Y \rangle := \sum_{i=1}^{m}\sum_{j=n}^{n} x_{ij} y_{ij} = \trace(X^\transp Y)$$ defines the Frobenius inner-product for $X,Y \in \Rmn$, which gives the \emph{Frobenius~norm} $$\|X\|_F := \sqrt{\trace(X^\transp X)}=\sqrt{\sum_{i=1}^{n}\sum_{j=1}^{m} x_{ij}^2} =  \sqrt{\sum_{i=1}^{\minmn} \sigma_i^2(X)}.$$
The Frobenius norm is unitarily invariant, i.e.  $\|UXV\|_F = \|X\|_F$ for all unitary matrices $U\in \mathbb{R}^{n \times n}$ and $V \in \mathbb{R}^{m \times m}$. We define for all $x=(x_1,\ldots,x_q)\in\mathbb{R}^q$ all $q \in \mathbb{N}$ the norms
\begin{align}
\ell_{1}(x)&:={\sum_{i=1}^{q} |x_i|}
\label{eq:ell_defs}, &\ell_2(x)&:=\sqrt{\sum_{i=1}^{q} x_i^2},&\ell_{\infty}(x)&:=\max_{i}|x_i|.
\end{align}
Then the Frobenius norm satisfies $\|X\|_F=\ell_2(\sigma(X))$, where $$\sigma(X):=(\sigma_1(X),\ldots,\sigma_{\minmn}(X)).$$ The functions $\ell_1$ and $\ell_{\infty}$ define the nuclear norm $\|X\|_{\ell_1}:=\ell_{1}(\sigma(X))$ and the spectral norm $\|X\|_{\ell_\infty}:=\ell_{\infty}(\sigma(X))=\sigma_1(X)$. 

For a set $\mathcal{C} \subset \Rmn$,
\begin{align*}
\chi_{\mathcal{C}}(X) := \begin{cases}
	0, & X \in \mathcal{C}\\
	\infty, & X\notin \mathcal{C}
\end{cases}
\end{align*}
denotes the so-called \emph{indicator function}. We also use $\chi_{\rk(\cdot)\leq r}$ to denote the indicator function of the set of matrices which have at most rank $r$. 

The following function properties will be used in this paper. The \emph{effective domain} of a function $f: \funcdom$ is defined as $$\dom (f) :=  \lbrace X \in \Rmn: f(X) < \infty \rbrace$$ and the \emph{epigraph} is defined as $$\epi(f) := \lbrace (X,t) : f(X) \leq t, X \in \dom (f), \ t \in \mathbb{R} \rbrace.$$ Further, $f$ is said to be: 
			 \begin{itemize}
			 	\item \emph{proper} if $\dom(f) \neq \emptyset$.
			 	\item \emph{closed} if $\epi(f)$ is a closed set.
			 	\item \emph{positively homogeneous (of degree 1)} if for all $X \in \dom(f)$ and $t > 0$ it holds that $f(tX) = tf(X)$.
			 	\item \emph{nonnegative} if $f(X) \geq 0$ for all $X \in \dom(f).$
			 	\item \emph{coercive} if $\lim_{\|X\|_F \to \infty} f(X) = \infty.$
			 \end{itemize}
A function $f:\mathbb{R}\cup\lbrace\infty\rbrace\to\mathbb{R}\cup\lbrace\infty\rbrace$ is called \emph{increasing} if $$x \leq y \ \Rightarrow \ f(x) \leq f(y) \ \text{ for all } \ x,y \in \dom(f)$$ and if there exist $x,y\in\mathbb{R}$ such that $x<y$ and $f(x)<f(y)$.

The \emph{conjugate (dual) function} $f^\ast$ of $f$ is defined as $$f^\ast(Y) :=   \sup_{X \in \Rmn} \left[ \langle X, Y \rangle - f(X) \right]$$
for all $Y \in \Rmn$. As long as $f$ is proper and minorized by an affine function, the conjugate $f^\ast$ is proper, closed and convex (see~\cite{hiriart2013convex}). The function $f^{\ast \ast} := (f^\ast)^\ast$ is called the \emph{biconjugate function} of $f$ and can be shown to be a convex minorizer of $f$, i.e.
\begin{equation*}
f(X) \geq f^{\ast \ast}(X) \ \text{for all} \ X \in \Rmn.
\end{equation*}
In fact, $f^{\ast \ast}$ is the point-wise supremum of all affine functions majorized by $f$ and therefore the largest convex minorizer of $f$. {{Commonly, $f^{\ast\ast}$ is also referred to as the convex envelope of $f$, which is motivated by the following equivalent characterization}} (see~\cite[Theorem~X.1.3.5, Corollary~X.1.3.6]{hiriart1996convex2}).
	\begin{lem}
		\label{lem:biconj_closed}
		Let $f:\funcdom$ be such that $f^{\ast \ast}$ is proper. Then 
		\begin{align*}
		\epi(f^{\ast \ast}) = \cl(\conv(\epi (f))), 
		\end{align*}
		where $\cl(\cdot)$ denotes the topological closure of a set and $\conv(\cdot)$ the convex hull. Further, $f^{\ast \ast} = f$ if and only if $f$ is proper, closed and convex.
	\end{lem}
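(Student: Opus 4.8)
The plan is to prove the epigraph identity first and then read off the characterization $f^{\ast\ast}=f$ as a direct consequence. Throughout I would lean on the two facts recalled just above the statement: that $f^{\ast\ast}\leq f$ pointwise, and that $f^{\ast\ast}$ is the \emph{largest} convex minorizer of $f$, being the pointwise supremum of all affine functions majorized by $f$. In particular $f^{\ast\ast}$ is closed and convex, so $\epi(f^{\ast\ast})$ is a closed convex set, and this is the engine that drives both inclusions.

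For the inclusion $\epi(f^{\ast\ast})\supseteq \cl(\conv(\epi(f)))$, I would argue as follows. Since $f^{\ast\ast}\leq f$, we have $\epi(f)\subseteq\epi(f^{\ast\ast})$; as $\epi(f^{\ast\ast})$ is closed and convex, it must contain the smallest closed convex set containing $\epi(f)$, which is exactly $\cl(\conv(\epi(f)))$. For the reverse inclusion, I set $C:=\cl(\conv(\epi(f)))$ and show that $C$ is itself the epigraph of a function $g$. The crucial point is that $C$ is \emph{upward closed} in the last coordinate: $\epi(f)$ trivially is, this property is preserved when forming convex combinations, and it survives taking the closure by a short sequential argument. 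Together with closedness of $C$, this lets me define $g(X):=\inf\lbrace t:(X,t)\in C\rbrace$ and verify $\epi(g)=C$, so $g$ is closed and convex. Because the first inclusion already gives $C\subseteq\epi(f^{\ast\ast})$ and $f^{\ast\ast}$ is proper, every $(X,t)\in C$ satisfies $t\geq f^{\ast\ast}(X)>-\infty$, so $g\geq f^{\ast\ast}$ never equals $-\infty$ and $g$ is proper. Finally $\epi(g)=C\supseteq\epi(f)$ means $g\leq f$, so $g$ is a convex minorizer of $f$; maximality of $f^{\ast\ast}$ among such minorizers yields $g\leq f^{\ast\ast}$, i.e. $C=\epi(g)\supseteq\epi(f^{\ast\ast})$. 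Combining the two inclusions gives $\epi(f^{\ast\ast})=C$.

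The equivalence then follows quickly. If $f$ is proper, closed and convex, then $\epi(f)$ is already closed and convex, so $C=\epi(f)$ and the epigraph identity gives $\epi(f^{\ast\ast})=\epi(f)$, that is $f^{\ast\ast}=f$; here $f^{\ast\ast}$ is proper because a proper closed convex $f$ is minorized by an affine function. Conversely, if $f=f^{\ast\ast}$ with $f^{\ast\ast}$ proper, then $f$ inherits the structure of a biconjugate: any conjugate function is closed and convex, and by hypothesis it is proper, so $f$ is proper, closed and convex.

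I expect the main obstacle to be the reverse inclusion, specifically the verification that $\cl(\conv(\epi(f)))$ is a genuine epigraph. Convexity and closedness are immediate, but checking that upward-closedness is preserved under both the convex hull and the closure requires the sequential argument above, and ruling out the degenerate case $g\equiv-\infty$ on part of its domain is exactly where the standing assumption that $f^{\ast\ast}$ is proper enters. Once $C$ is known to be the epigraph of a proper closed convex function, the maximality property of $f^{\ast\ast}$ closes the argument cleanly.
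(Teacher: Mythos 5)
Your proof is correct. Note that there is no internal argument in the paper to compare against: the lemma is quoted from Hiriart-Urruty and Lemar\'echal (Theorem~X.1.3.5 and Corollary~X.1.3.6 of the cited reference), so your self-contained argument fills in what the paper delegates to a citation, and it is essentially the standard textbook proof. The two places where such proofs typically go wrong are both handled properly: (i) you verify that $C=\cl(\conv(\epi(f)))$ is itself an epigraph by checking that upward closedness in the last coordinate survives both the convex-hull and the closure operations, rather than just asserting it; and (ii) you invoke the hypothesis that $f^{\ast\ast}$ is proper exactly where it is needed, namely to rule out the degenerate case $g(X)=-\infty$ (equivalently, to rule out $C$ containing vertical lines), which is what makes $g$ a genuine proper closed convex minorizer of $f$ to which the maximality of $f^{\ast\ast}$ can be applied. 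One small remark: the paper's phrase ``largest convex minorizer'' is slightly loose --- the biconjugate is the largest \emph{closed} convex minorizer --- but your argument is unaffected by this distinction, since your $g$ is closed by construction (its epigraph is the closed set $C$). Your derivation of the Fenchel--Moreau equivalence from the epigraph identity, including the observation that a proper closed convex $f$ admits an affine minorant so that $f^{\ast\ast}$ is proper and the identity may legitimately be applied, is also correct.
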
 
\cref{lem:biconj_closed} implies that for a proper, but possibly non-convex function $f$, it holds that
$$\inf_{X \in \Rmn} f(X) = \inf_{X \in \Rmn} f^{\ast \ast}(X)$$
and therefore $f^{\ast \ast}$ is the largest convex minorizer of $f$. However, determining $f^{\ast \ast}$ is as difficult as minimizing the non-convex function $f$. Instead, it is common to convexify the problem by splitting the function into $f = f_1 + f_2$, such that $f_1^{\ast \ast}$ and $f_2^{\ast \ast}$ can be easily computed. If $f_1$ is proper, closed and convex, then $f_1=f_1^{**}$ and $f_1 + f_2^{\ast \ast}$ is the largest convex minorizer of $f$ that keeps $f_1$ as a summand. In general, $ f^{\ast \ast} \neq f_1 + f_2^{\ast \ast}$, i.e. $f_1 + f_2^{\ast \ast}$ is not the largest convex minorizer of $f$. Therefore, the following lemma follows.
\begin{lem}
	\label{lem:zero-dual}
Let $f_1, f_2: \funcdom$ be such that $f_1$ is proper, closed and convex. Then,
\begin{align}
\inf_{X \in \Rmn} \left[ f_1(X) +f_2(X) \right] \geq \inf_{X \in \Rmn}\left[f_1(X)+f_2^{\ast \ast}(X)\right]. \label{lem:fenchel_inequ}
\end{align}	
If $X^\opts$ is a solution to the right-hand side and $f_2(X^\opts) = f_2^{\ast \ast}(X^\opts)$, then equality holds and $X^\opts$ is also a solution to the left-hand side.
\end{lem}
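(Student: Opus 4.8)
The plan is to obtain the inequality \cref{lem:fenchel_inequ} directly from the pointwise minorization property of the biconjugate, and then to establish the equality condition by a short chain of inequalities evaluated at $X^\opts$. It is worth noting in advance that the closedness and convexity of $f_1$ are not actually used in the argument; they are the relevant setting (they guarantee $f_1 = f_1^{\ast\ast}$, so that $f_1 + f_2^{\ast\ast}$ is the largest convex minorizer keeping $f_1$ as a summand), but the two claims of the lemma hold for any $f_1$.

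First I would recall that, as established in the discussion preceding the statement, $f_2^{\ast \ast}$ is a convex minorizer of $f_2$, so $f_2^{\ast \ast}(X) \leq f_2(X)$ for every $X \in \Rmn$. Adding $f_1(X)$ to both sides is unambiguous, since all values lie in $\mathbb{R} \cup \{\infty\}$ and hence no $\infty - \infty$ can arise, giving
$$f_1(X) + f_2^{\ast \ast}(X) \leq f_1(X) + f_2(X) \quad \text{for all } X \in \Rmn.$$
Taking the infimum over $X$ on both sides preserves the inequality, which is precisely \cref{lem:fenchel_inequ}.

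For the second claim, suppose $X^\opts$ attains the right-hand side and $f_2(X^\opts) = f_2^{\ast \ast}(X^\opts)$. Then I would chain
\begin{align*}
\inf_{X \in \Rmn}\left[f_1(X) + f_2(X)\right]
&\leq f_1(X^\opts) + f_2(X^\opts) \\
&= f_1(X^\opts) + f_2^{\ast \ast}(X^\opts) \\
&= \inf_{X \in \Rmn}\left[f_1(X) + f_2^{\ast \ast}(X)\right],
\end{align*}
where the first step is the defining property of the infimum, the second uses the hypothesis $f_2(X^\opts) = f_2^{\ast \ast}(X^\opts)$, and the last uses that $X^\opts$ solves the right-hand side. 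Combining this with \cref{lem:fenchel_inequ} forces equality throughout, which simultaneously shows that the two infima agree and that $X^\opts$ attains the left-hand side.

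The argument is essentially immediate once $f_2^{\ast \ast} \leq f_2$ is in hand, so I do not expect a substantial obstacle here. The only point requiring a moment of care is that adding $f_1$ and passing to infima are legitimate operations in the extended reals; this is ensured because $-\infty$ is excluded from the range of $f_1$ and $f_2$, and because $\inf g \geq \inf h$ whenever $g \geq h$ pointwise.
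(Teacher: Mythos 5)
Your proposal is correct and matches the paper's reasoning exactly: the paper gives no separate proof of this lemma, treating it as an immediate consequence of the preceding discussion that $f_2^{\ast\ast}$ is a (indeed the largest) convex minorizer of $f_2$, which is precisely the pointwise bound $f_2^{\ast\ast}\leq f_2$ your argument rests on, followed by the same elementary infimum chain for the equality case. Your side remark that properness, closedness and convexity of $f_1$ are not actually invoked is also accurate; as you note, those hypotheses only serve to justify the interpretation of $f_1+f_2^{\ast\ast}$ as the optimal convex relaxation.
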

\Cref{lem:zero-dual} motivates the use of our terminology that $f_1 + f_2^{\ast \ast}$ is the \emph{optimal convex relaxation} of a given splitting $f_1 + f_2$, when $f_1$ is proper, closed and convex. 

Finally, if $f: \mathbb{R} \to \mathbb{R} \cup \{ \infty \}$, then the \emph{monotone conjugate} is defined as $$f^+(y) := \sup_{x \geq 0} \left[ \langle x, y \rangle - f(x) \right] \ \text{ for all } y\in\mathbb{R}.$$

\section{Low-Rank Inducing Norms}
\label{sec:norms}

This section introduces the family of \emph{low-rank inducing norms}, which includes the nuclear norm as a special case. These norms can be used as regularizers in optimization problems to promote low-rank solutions. To define them, we need to characterize the class of unitarily invariant norms in terms of symmetric gauge functions. This characterization can be found in, e.g.~\cite[Theorem~7.4.7.2]{horn2012matrix}.
			\begin{defn}
		\label{def:gauge}
		A function $g: \mathbb{R}^q \to \mathbb{R}_{\geq 0}$ is a symmetric gauge function if
		\begin{enumerate}[i.]
			\item $g$ is a~norm.
			\item $\forall x \in \mathbb{R}^{q}: g(|x|) = g(x)$, where $|x|$ denotes the element-wise absolute value.
			\item $g(Px) = g(x)$ for all permutation matrices $P\in\mathbb{R}^{q\times q}$ and all $x\in\mathbb{R}^q$.
		\end{enumerate}
	\end{defn}
	
	\begin{prop}
		\label{thm:symmgauge}
		The norm $\| \cdot \|:\Rmn\to\mathbb{R}$ is unitarily invariant if and only if $$\| X \| = g(\sigma_1(X),\dots,\sigma_{\min \lbrace m,n \rbrace }(X))$$ for all $X\in\Rmn$, where $g$ is a symmetric gauge function.
	\end{prop}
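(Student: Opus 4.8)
The plan is to establish both implications of this classical characterization (von Neumann's theorem). Throughout write $q := \minmn$ and, for $x \in \mathbb{R}^q$, let $D_x \in \Rmn$ denote the matrix carrying $x_1, \dots, x_q$ on its main diagonal and zeros elsewhere. The map $x \mapsto D_x$ is the bridge between vectors and matrices that makes both directions go through.

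For the \emph{sufficiency} direction, assume $\|X\| = g(\sigma_1(X), \dots, \sigma_q(X))$ for a symmetric gauge function $g$. Unitary invariance is immediate, since $\sigma(UXV) = \sigma(X)$ for all unitary $U, V$, whence $\|UXV\| = g(\sigma(UXV)) = g(\sigma(X)) = \|X\|$. For the norm axioms, positive definiteness follows because $g$ is a norm and $\sigma(X) = 0$ if and only if $X = 0$, while positive homogeneity follows from $\sigma(tX) = |t|\,\sigma(X)$ combined with the homogeneity of $g$ and property ii of \Cref{def:gauge}. The one nontrivial axiom is the triangle inequality, which I defer to the last paragraph.

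For the \emph{necessity} direction, assume $\|\cdot\|$ is unitarily invariant and define $g(x) := \|D_x\|$ for $x \in \mathbb{R}^q$. The claimed representation is then almost free: writing a singular value decomposition $X = U\Sigma V^\transp$ with $\Sigma = D_{\sigma(X)}$ and invoking unitary invariance gives $\|X\| = \|\Sigma\| = g(\sigma(X))$. It remains to verify that $g$ is a symmetric gauge function. Since $x \mapsto D_x$ is linear and injective, definiteness, homogeneity and subadditivity of $\|\cdot\|$ transfer to $g$, so $g$ is a norm. Property ii holds because $D_{|x|} = S D_x$ for the diagonal signature matrix $S = \diag(\sign(x_1), \dots, \sign(x_q))$ (suitably padded), which is orthogonal, so $g(|x|) = \|S D_x\| = \|D_x\| = g(x)$. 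Property iii holds because permuting the diagonal entries of $D_x$ is realized as $D_{Px} = P_L D_x P_R$ for permutation matrices $P_L \in \mathbb{R}^{n\times n}$ and $P_R \in \mathbb{R}^{m\times m}$ obtained by extending $P$ with the identity, and permutation matrices are unitary.

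The main obstacle is the triangle inequality $g(\sigma(X+Y)) \leq g(\sigma(X)) + g(\sigma(Y))$ in the sufficiency direction, which I would handle through majorization. First, the Ky Fan $k$-norms $X \mapsto \sum_{i=1}^k \sigma_i(X)$ are genuinely norms, and their subadditivity yields the weak majorization $\sum_{i=1}^k \sigma_i(X+Y) \leq \sum_{i=1}^k [\sigma_i(X)+\sigma_i(Y)]$ for every $k$. Second, one shows that a symmetric gauge function is monotone with respect to weak majorization on the nonnegative orthant, i.e.\ $a \prec_w b$ implies $g(a) \leq g(b)$; this is precisely where properties ii and iii of \Cref{def:gauge} are genuinely used. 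Combining the two with the ordinary triangle inequality of $g$ gives $g(\sigma(X+Y)) \leq g(\sigma(X)+\sigma(Y)) \leq g(\sigma(X)) + g(\sigma(Y))$, completing the argument. Establishing the monotonicity lemma --- essentially that $g$ admits a representation as a supremum over permutations and sign changes of linear functionals --- is the technical heart of the proof, and is the step I would expect to require the most care.
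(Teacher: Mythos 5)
The paper offers no proof of \cref{thm:symmgauge} at all: this is von Neumann's classical characterization of unitarily invariant norms, and the paper simply invokes it with the citation \cite[Theorem~7.4.7.2]{horn2012matrix}. Your sketch reconstructs exactly the standard textbook argument that this citation points to, and it is sound in outline. The necessity direction (define $g(x) := \|D_x\|$, use the SVD and unitary invariance to get the representation, transfer the norm axioms through the linear injective map $x \mapsto D_x$, and realize sign changes and permutations of diagonal entries by orthogonal matrices acting on the left and right) is essentially complete as written, modulo the trivial convention $\sign(0):=1$ so that your signature matrix is actually orthogonal. The sufficiency direction correctly isolates the only nontrivial point, the triangle inequality, and handles it the standard way: Ky Fan's weak majorization $\sigma(X+Y) \prec_w \sigma(X)+\sigma(Y)$ combined with the lemma that symmetric gauge functions are monotone with respect to weak majorization on the nonnegative orthant. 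Both of these deferred facts are genuine theorems rather than routine verifications --- the second, which you rightly flag as the technical heart, is where properties ii and iii of \cref{def:gauge} really enter, via expressing a weakly majorized nonnegative vector as a convex combination of coordinate permutations and sign flips of the dominating one --- but they are standard, and they are precisely the content of the development in \cite{horn2012matrix} leading up to the cited theorem. In short: your approach is the correct and canonical one, correctly structured, with two supporting lemmas asserted rather than proved; since the paper itself proves nothing here and defers entirely to the literature, your sketch is strictly more informative than the paper's treatment.
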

As noted in~\Cref{sec:prelim}, the gauge functions for the Frobenius norm, spectral norm, and nuclear norm are $g=\ell_{2}$, $g=\ell_{\infty}$, and $g=\ell_{1}$, respectively, where $\ell_1$, $\ell_2$, and $\ell_{\infty}$ are defined in \cref{eq:ell_defs}.

The dual norm of a unitarily invariant norm is also unitarily invariant (see~\cite[Theorem~5.6.39]{horn2012matrix}). Therefore, it has an associated symmetric gauge function. This will be denoted by $g^D$, if the symmetric gauge function of the original norm is denoted by $g$. More specifically, let $M \in \Rmn$, $q:= \minmn$, and $g: \mathbb{R}^q \to \mathbb{R}_{\geq 0}$ be a symmetric gauge function associated with a unitarily invariant norm
	\begin{align*}
	\|M\|_g &:= g(\sigma_1(M),\dots,\sigma_q(M)).
	\end{align*} 
Then the dual of this norm is defined as
	\begin{align}
	\|Y\|_{g^D} &:= \max_{\|M\|_g \leq 1} \langle Y, M \rangle= g^D(\sigma_1(Y),\dots,\sigma_q(Y)) ,
\label{eq:dual_uni_norm}
	\end{align} 
	where the dual gauge function $g^D$ satisfies
	\begin{align}
	g^D(\sigma_1(Y),\dots,\sigma_q(Y)) = \max_{g(\sigma_1(M),\dots,\sigma_q(M)) \leq 1} \sum_{i=1}^{q} \sigma_i(M)\sigma_i(Y).
\label{eq:gD_def}
	\end{align}
The low-rank inducing norms will be defined as the dual norm of a rank constrained dual norm. This rank constrained dual norm is defined as
\begin{equation}
\| Y \|_{g^D,r} :=  \max_{\stackrel{\rk(M) \leq r}{\|M\|_g \leq 1}} \langle M, Y \rangle \label{eq:def_rank_ind_norm_dual}
\end{equation}
and the corresponding \emph{low-rank inducing norm} as
\begin{align}
\|M\|_{g,r*} := \max_{\|Y\|_{g^D,r}\leq 1}\langle Y,M\rangle.
\label{eq:def_rank_ind_norm}
\end{align}
For {{$r=\min\{m,n\}$}}, the rank constraint in \cref{eq:def_rank_ind_norm_dual} is redundant and the dual of the dual becomes the norm itself. 

For symmetric gauge functions $g:\mathbb{R}^q\to\mathbb{R}_{\geq 0}$, we denote their truncated symmetric gauge functions by $g(\sigma_1,\dots,\sigma_r) := g(\sigma_1,\dots,\sigma_r,0,\dots,0)$ for any $r \in \{1,\ldots,q\}$. With this notation in mind, some properties of low-rank inducing norms and their duals are stated in the following lemma. A proof is given in~\Cref{app:lem_norm_pf}. 
	\begin{lem}
		\label{lem:norm}
		Let $M,Y \in \Rmn$, $r \in \mathbb{N}$ be such that $1\leq r \leq q : = \minmn$, and $g: \mathbb{R}^q \to \mathbb{R}_{\geq 0}$ be a symmetric gauge function. Then $\|\cdot\|_{g^D,r}$ is a unitarily invariant norm that satisfies
		\begin{equation}
\| Y \|_{g^D,r} = g^D(\sigma_1(Y),\dots,\sigma_r(Y)).
\label{eq:firstass}
		\end{equation} 
		Its dual~norm $\|\cdot\|_{g,r\ast}$ satisfies
		\begin{align} 
		\| M \|_{g,r\ast} &=\max_{g^D(\sigma_1(Y),\dots,\sigma_r(Y)) \leq 1} \left[ \sum_{i=1}^{r} \sigma_i(M) \sigma_i(Y) + \sigma_r(Y) \sum_{i=r+1}^{q} \sigma_i(M) \right], \label{eq:grast_explic}
		\end{align} 
		and
		\begin{align}
		&\|M\|_g = \|M\|_{g,q\ast} \leq \dots \leq \|M\|_{g,1\ast}, \label{eq:norm_ineq}\\
		&\rk(M)  \leq r \ \Rightarrow \ \|M \|_{g} = \|M \|_{g,r\ast}. \label{eq:rank_norm}
		\end{align}
	\end{lem}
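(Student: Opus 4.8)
The plan is to prove the four assertions in turn, with von Neumann's trace inequality as the central tool. Recall that for any $M,Y \in \Rmn$ one has $\langle M,Y\rangle \le \sum_{i=1}^q \sigma_i(M)\sigma_i(Y)$, with equality when $M$ and $Y$ are placed in a simultaneous singular value decomposition, i.e. $M = U\diag(\sigma(M))V^\transp$ and $Y = U\diag(\sigma(Y))V^\transp$ for common unitaries $U,V$. I would use this repeatedly to reduce the matrix optimizations in \cref{eq:def_rank_ind_norm_dual} and \cref{eq:def_rank_ind_norm} to optimizations over singular values alone.

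First, to obtain \cref{eq:firstass} I start from \cref{eq:def_rank_ind_norm_dual}. Since $\rk(M) \le r$ forces $\sigma_i(M)=0$ for $i>r$, the trace inequality gives $\langle M,Y\rangle \le \sum_{i=1}^r \sigma_i(M)\sigma_i(Y)$, while $\|M\|_g \le 1$ reads $g(\sigma_1(M),\dots,\sigma_r(M)) \le 1$ in the truncated notation. Maximizing over admissible singular values, and attaining equality by choosing $M$ aligned with the SVD of $Y$, identifies the supremum with the truncated dual gauge function via \cref{eq:gD_def}, namely $g^D(\sigma_1(Y),\dots,\sigma_r(Y))$. That $\|\cdot\|_{g^D,r}$ is a unitarily invariant norm I would get by observing it is the support function of the set $\{M : \rk(M)\le r,\ \|M\|_g\le 1\}$, which is bounded, symmetric under $M\mapsto -M$, and contains rank-one matrices spanning $\Rmn$ (ensuring definiteness); unitary invariance follows because this set is invariant under $M\mapsto U^\transp M V^\transp$.

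Next, for \cref{eq:grast_explic} I substitute \cref{eq:firstass} into \cref{eq:def_rank_ind_norm} and apply the trace inequality once more, now maximizing over $Y$. The objective becomes $\sum_{i=1}^q \sigma_i(M)\sigma_i(Y)$ subject to $g^D(\sigma_1(Y),\dots,\sigma_r(Y))\le 1$ together with the ordering $\sigma_1(Y)\ge\dots\ge\sigma_q(Y)\ge 0$. The tail values $\sigma_{r+1}(Y),\dots,\sigma_q(Y)$ enter only the objective, with nonnegative coefficients $\sigma_i(M)$, and are constrained only by $\sigma_i(Y)\le\sigma_r(Y)$; hence each is pushed to $\sigma_r(Y)$ at the optimum, yielding the term $\sigma_r(Y)\sum_{i=r+1}^q \sigma_i(M)$ and the stated formula. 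The chain \cref{eq:norm_ineq} then follows structurally: for $r=q$ the rank constraint in \cref{eq:def_rank_ind_norm_dual} is vacuous, so the tail term in \cref{eq:grast_explic} disappears and one recovers the bidual $\|M\|_g$; for monotonicity I use that a symmetric gauge function is monotone in the absolute values of its arguments (a consequence of the gauge axioms), so $g^D(\sigma_1(Y),\dots,\sigma_r(Y)) \le g^D(\sigma_1(Y),\dots,\sigma_{r+1}(Y))$, the unit balls $\{\|Y\|_{g^D,r}\le 1\}$ are nested decreasingly in $r$, and taking the support function over the larger ball gives $\|M\|_{g,(r+1)\ast}\le\|M\|_{g,r\ast}$. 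Finally, \cref{eq:rank_norm} is immediate from \cref{eq:grast_explic}: if $\rk(M)\le r$ the tail sum vanishes and the remaining maximum equals $\|M\|_g$ by duality of the truncated gauge function against its truncated dual.

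I expect the main obstacle to be the equality and feasibility arguments underlying the first two steps: von Neumann's inequality must be invoked in its sharp form, realized by a matrix sharing singular vectors with the fixed argument and carrying the prescribed singular values, and one must verify that such a matrix is feasible given that only the first $r$ singular values are constrained. Once the monotonicity of $g^D$ and the truncated-gauge duality are recorded, the remaining verifications for \cref{eq:norm_ineq} and \cref{eq:rank_norm} are routine.
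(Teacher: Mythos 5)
Your proposal is correct and follows essentially the same route as the paper's proof: both reduce the matrix maximizations to singular-value problems via the sharp form of von Neumann's trace inequality (the paper invokes this through \cite[Corollary~7.4.1.3(c)]{horn2012matrix}), push the tail singular values up to $\sigma_r(Y)$ to obtain \cref{eq:grast_explic}, get the chain \cref{eq:norm_ineq} from nested unit balls together with $\|\cdot\|_{g^D,q}=\|\cdot\|_{g^D}$, and get \cref{eq:rank_norm} from the vanishing tail in the derived expression. The only cosmetic deviations are your support-function argument for showing $\|\cdot\|_{g^D,r}$ is a unitarily invariant norm, where the paper instead observes that the truncated gauge is itself a symmetric gauge function and applies \cref{thm:symmgauge}, and your appeal to monotonicity of $g^D$ for the ball nesting, where the paper uses the inclusion of the rank-constrained constraint sets in \cref{eq:def_rank_ind_norm_dual}.
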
 
This paper particularly focuses on low-rank inducing norms originating from the Frobenius norm and the spectral norm. When the original norm is the Frobenius norm, then $g=\ell_2$. Since the norm is self dual, it satisfies $g^D=\ell_2^D=\ell_2$. Then the truncated version in \cref{eq:firstass} becomes
	\begin{align*}
	\|Y\|_{\ell_2^D,r}:=\sqrt{\sum_{i=1}^r \sigma_i^2(Y)}.
	\end{align*}
The corresponding low-rank inducing norm is referred to as the \emph{low-rank inducing Frobenius norm}, and is denoted by
	\begin{align*}
	\|M\|_{\ell_2,r*:}=\max_{\|Y\|_r \leq 1} \langle Y,M \rangle.
	\end{align*}
If the original norm, instead, is the spectral norm, we have $g=\ell_{\infty}$. The dual norm is the nuclear (trace) norm (see \cite[Theorem~5.6.42]{horn2012matrix}), with gauge function $g^D=\ell_1$. Its truncated version is given by
	\begin{align*}
	\normkyr{Y} &:= \sum_{i=1}^r \sigma_i(Y),
	\end{align*}
and its dual, which we refer to as the \emph{low-rank inducing spectral norm}, is denoted by
	\begin{align*}
	\normkyrast{M}:=\max_{\normkyr{Y}\leq 1}\langle Y,M\rangle.
	\end{align*}
The nuclear norm is a special case of these low-rank inducing norms, corresponding to $r=1$.
\begin{prop}
The nuclear norm satisfies $\|\cdot\|_{\ell_1}=\|\cdot\|_{g,1*}$, where $\|\cdot\|_g$ is any unitarily invariant norm with $g(\sigma_1)=|\sigma_1|$.
\label{prop:nuclear_norm_characterization}
\end{prop}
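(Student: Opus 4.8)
The plan is to show directly that the rank-one restricted dual norm $\|\cdot\|_{g^D,1}$ coincides with the spectral norm $\|\cdot\|_{\ell_\infty}$, after which dualizing immediately yields the nuclear norm. First I would unpack the hypothesis as a normalization. Since $g$ is a symmetric gauge function, positive homogeneity together with $g(|x|)=g(x)$ gives $g(\sigma_1,0,\dots,0)=|\sigma_1|\,g(1,0,\dots,0)$, so the assumption $g(\sigma_1)=|\sigma_1|$ is equivalent to $g(1,0,\dots,0)=1$. Consequently, by \Cref{thm:symmgauge}, every rank-one matrix $M$ satisfies $\|M\|_g = g(\sigma_1(M),0,\dots,0) = \sigma_1(M)$; that is, the restriction of $\|\cdot\|_g$ to rank-one matrices agrees with the spectral norm.

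Next I would evaluate $\|Y\|_{g^D,1}$ from the definition \cref{eq:def_rank_ind_norm_dual}. By the previous step the feasible set $\{M : \rk(M)\le 1,\ \|M\|_g\le 1\}$ equals $\{M : \rk(M)\le 1,\ \sigma_1(M)\le 1\}$, so $\|Y\|_{g^D,1} = \max_{\rk(M)\le 1,\ \sigma_1(M)\le 1}\langle M,Y\rangle$. By von Neumann's trace inequality, any rank-one $M$ with $\sigma_1(M)\le 1$ obeys $\langle M,Y\rangle \le \sigma_1(M)\,\sigma_1(Y)\le\sigma_1(Y)$, and equality is attained by taking $M = u_1 v_1^\transp$ with $u_1,v_1$ the leading left and right singular vectors of $Y$. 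Hence $\|Y\|_{g^D,1}=\sigma_1(Y)=\|Y\|_{\ell_\infty}$. Equivalently, one may read this off \cref{eq:firstass} of \Cref{lem:norm}, which gives $\|Y\|_{g^D,1}=g^D(\sigma_1(Y))$, together with the fact that the truncated dual gauge at $r=1$ is again the absolute value under the normalization $g(1,0,\dots,0)=1$.

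Finally, substituting into the definition \cref{eq:def_rank_ind_norm} of the low-rank inducing norm at $r=1$ gives $\|M\|_{g,1*} = \max_{\|Y\|_{\ell_\infty}\le 1}\langle Y,M\rangle$, which is precisely the dual of the spectral norm. Since the dual of the spectral norm is the nuclear norm (see \cite[Theorem~5.6.42]{horn2012matrix}), I conclude that $\|M\|_{g,1*}=\|M\|_{\ell_1}$.

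The only genuine work lies in the middle step, identifying the rank-one restricted dual norm with the spectral norm. I expect the main subtlety to be verifying that the normalization $g(1,0,\dots,0)=1$ is exactly what forces the rank-one restriction of $\|\cdot\|_g$ to equal the spectral norm, so that no spurious scaling factor $g(1,0,\dots,0)$ survives into the final identity; everything else reduces to the standard spectral/nuclear norm duality and von Neumann's trace inequality.
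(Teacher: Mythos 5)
Your proof is correct, but it takes a different route from the paper's. The paper first invokes \cite[Corollary~7.4.1.3(c)]{horn2012matrix} to translate the hypothesis $g(\sigma_1)=|\sigma_1|$ into the dual statement $g^D(\sigma_1)=|\sigma_1|$, and then plugs $r=1$ into the already-established explicit formula \cref{eq:grast_explic} of \Cref{lem:norm}, so that $\|M\|_{g,1*}=\max_{\sigma_1(Y)\leq 1}\sigma_1(Y)\sum_{i}\sigma_i(M)=\|M\|_{\ell_1}$; the whole argument is three lines because the heavy lifting was done in \Cref{lem:norm}. You instead stay on the primal side: you show that the normalization $g(1,0,\dots,0)=1$ forces $\|M\|_g=\sigma_1(M)$ on rank-one matrices, compute $\|Y\|_{g^D,1}$ directly from the definition \cref{eq:def_rank_ind_norm_dual} via von Neumann's trace inequality to get $\|\cdot\|_{g^D,1}=\|\cdot\|_{\ell_\infty}$, and only then dualize through \cref{eq:def_rank_ind_norm}. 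In effect you re-derive the $r=1$ case of \cref{eq:firstass} with the normalization built in, rather than citing it together with the dual-gauge correspondence. What your version buys is a self-contained argument that isolates a clean intermediate fact --- the rank-one restricted dual norm of \emph{any} such unitarily invariant norm is the spectral norm --- and avoids the dual-gauge lemma entirely; what the paper's version buys is brevity, since it reuses \Cref{lem:norm} and only needs to check how the hypothesis on $g$ passes to $g^D$. Both proofs terminate with the same appeal to the spectral/nuclear duality \cite[Theorem~5.6.42]{horn2012matrix}, and your secondary remark (reading the middle step off \cref{eq:firstass} plus the dual-gauge fact) is exactly the paper's route, so you have in fact identified both proofs.
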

A proof to this proposition is found in \Cref{app:prop_nuclear_norm_characterization_pf}.

Next, we state a result that is the key to our optimality interpretations for low-rank inducing norms in the next section. 

\begin{lem}\label{lem:convhull}
Let $B_{g,r\ast}^{1} := \{X \in \Rmn : \|X\|_{g,r\ast} \leq 1 \}$ be the unit low-rank inducing norm ball and let 
\begin{align}
E_{g,r} := \{X \in \Rmn : \|X\|_{g} = 1, \ \rk(X) \leq r \}.
\label{eq:def_Er}
\end{align}
Then $B_{g,r\ast}^{1} = \conv(E_{g,r})$, i.e. all $M\in\Rmn$ can be decomposed as
\begin{equation*}
\textstyle M = \sum_i \alpha_i M_i \quad \text{with} \quad \sum_i \alpha_i = 1, \ \alpha_i \geq 0,
\end{equation*}
where $M_i$ satisfies $\rk(M_i) \leq r$ and 
\begin{equation*}
\|M_i\|_{g} = \|M_i\|_{g,r\ast} = \|M\|_{g,r\ast}.
\end{equation*}
\end{lem}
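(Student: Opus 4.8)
The plan is to prove the set identity $B_{g,r\ast}^{1} = \conv(E_{g,r})$ first, and then obtain the stated decomposition by a simple scaling argument. The two sets are convex, so it suffices to establish the two inclusions. For $\conv(E_{g,r}) \subseteq B_{g,r\ast}^{1}$, I would take any $X \in E_{g,r}$; since $\rk(X) \leq r$, the implication \cref{eq:rank_norm} gives $\|X\|_{g,r\ast} = \|X\|_g = 1$, so $X \in B_{g,r\ast}^{1}$. As $B_{g,r\ast}^{1}$ is convex (the unit ball of the norm $\|\cdot\|_{g,r\ast}$), it contains $\conv(E_{g,r})$.

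The reverse inclusion is the main obstacle, and I would handle it through support functions. By \cref{eq:def_rank_ind_norm}, $\|\cdot\|_{g,r\ast}$ is the dual norm of $\|\cdot\|_{g^D,r}$; by finite-dimensional reflexivity of norms, $\|\cdot\|_{g^D,r}$ is in turn the dual of $\|\cdot\|_{g,r\ast}$, so the support function of $B_{g,r\ast}^{1}$ is
\[
\sigma_{B_{g,r\ast}^{1}}(Y) = \sup_{\|X\|_{g,r\ast}\leq 1}\langle Y, X\rangle = \|Y\|_{g^D,r}.
\]
On the other hand, a support function is unchanged under taking convex hulls, so $\sigma_{\conv(E_{g,r})} = \sigma_{E_{g,r}}$, and by \cref{eq:def_rank_ind_norm_dual},
\[
\sigma_{E_{g,r}}(Y) = \sup_{\|X\|_g = 1,\, \rk(X)\leq r}\langle Y, X\rangle = \max_{\|X\|_g \leq 1,\, \rk(X) \leq r}\langle Y, X\rangle = \|Y\|_{g^D,r},
\]
where the middle equality uses that $\langle Y, \cdot\rangle$ attains its maximum over the rank-constrained unit ball on the sphere $\|X\|_g = 1$ (scale any nonzero maximizer to unit norm). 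Thus both sets share the support function $\|\cdot\|_{g^D,r}$.

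To conclude equality rather than merely equal closed convex hulls, I would verify that both sets are closed and convex. The ball $B_{g,r\ast}^{1}$ is closed and convex as the unit ball of a norm. For $\conv(E_{g,r})$, the set of matrices of rank at most $r$ is closed and $\{X : \|X\|_g = 1\}$ is compact, so $E_{g,r}$ is compact; by Carath\'eodory's theorem the convex hull of a compact set in a finite-dimensional space is compact, hence closed. Two nonempty closed convex sets with the same support function coincide, which yields $B_{g,r\ast}^{1} = \conv(E_{g,r})$.

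Finally, the decomposition follows by scaling. For $M \neq 0$ set $c := \|M\|_{g,r\ast} > 0$; then $M/c \in B_{g,r\ast}^{1} = \conv(E_{g,r})$, so $M/c = \sum_i \alpha_i N_i$ with $\alpha_i \geq 0$, $\sum_i \alpha_i = 1$ and $N_i \in E_{g,r}$. Setting $M_i := c N_i$ gives $M = \sum_i \alpha_i M_i$ with $\rk(M_i) \leq r$ and $\|M_i\|_g = c = \|M\|_{g,r\ast}$; invoking \cref{eq:rank_norm} once more, $\|M_i\|_{g,r\ast} = \|M_i\|_g = \|M\|_{g,r\ast}$, as required (the case $M = 0$ is trivial). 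The only delicate point in the whole argument is the closedness of $\conv(E_{g,r})$, which is precisely what lets us avoid an extra closure operation and match the statement exactly.
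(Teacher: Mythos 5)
Your proof is correct and follows essentially the same route as the paper's: both identify $\|\cdot\|_{g^D,r}$ as the common support function of $B_{g,r\ast}^{1}$ and $\conv(E_{g,r})$, conclude that the two closed convex sets coincide, and then obtain the stated decomposition by rescaling $M$ into the unit ball and invoking \cref{eq:rank_norm}. The only notable difference is that you explicitly justify the closedness of $\conv(E_{g,r})$ (compactness of $E_{g,r}$ plus Carath\'eodory's theorem), a point the paper's proof asserts without comment.
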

A proof to this lemma is given in~\Cref{app:lem_convhull_pf}. The result is a direct consequence of \cref{lem:norm} and extends {{the well-known result for the unit-ball of the nuclear norm, which can be parametrized as the convex hull of all rank-1 matrices with unit spectral (or Frobenius) norm (see~\cite{watson1992characterization}).}}

In many cases, the set $E_{g,r}$ is the set of extreme points to the unit ball $B_{g,r\ast}^1$. The following result is proven in~\Cref{app:prp_rank_r_ind_str_conv_pf}.
\begin{prop}
Suppose that $\|\cdot\|_g$ satisfies
\begin{equation*}
 \textstyle \|\sum_{i}\alpha_i M_i\|_g<\sum_i\alpha_i \|M_i\|_g
\end{equation*}
for all $\alpha_i\in(0,1)$ such that $\sum_i\alpha_i=1$, and all {{distinct}} $M_i\in\Rmn$ with $\|M_i\|_g=1$.  Then $E_{g,r}$ in \cref{eq:def_Er} is the set of extreme points to $B_{g,r*}^1$.
\label{prp:rank_r_ind_str_conv}
\end{prop}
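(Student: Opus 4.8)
The plan is to prove the two inclusions separately: first that every extreme point of $B_{g,r\ast}^{1}$ lies in $E_{g,r}$, and then, using the strict-convexity hypothesis, that every element of $E_{g,r}$ is in fact extreme. Throughout I write $\operatorname{ext}(\cdot)$ for the set of extreme points.

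For the inclusion $\operatorname{ext}(B_{g,r\ast}^{1}) \subseteq E_{g,r}$, I would start from \cref{lem:convhull}, which gives $B_{g,r\ast}^{1} = \conv(E_{g,r})$. The first observation is that $E_{g,r}$ is compact: it is bounded because $\|X\|_g = 1$ and all norms on the finite-dimensional space $\Rmn$ are equivalent, and it is closed because it is the intersection of the closed level set $\{X : \|X\|_g = 1\}$ with the closed set $\{X : \rk(X) \leq r\}$ (the vanishing locus of all $(r+1)$-minors). I would then invoke the standard fact that an extreme point of the convex hull of a compact set must belong to that set: expanding an extreme point by Carath\'eodory as a convex combination of elements of $E_{g,r} \subseteq B_{g,r\ast}^{1}$, extremality forces all participating points to coincide with it, placing it in $E_{g,r}$.

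For the reverse inclusion $E_{g,r} \subseteq \operatorname{ext}(B_{g,r\ast}^{1})$, which is where the hypothesis is used, I would take an arbitrary $M \in E_{g,r}$ and show it cannot be split nontrivially. Since $\rk(M) \leq r$ and $\|M\|_g = 1$, \cref{eq:rank_norm} gives $\|M\|_{g,r\ast} = \|M\|_g = 1$, so $M \in B_{g,r\ast}^{1}$. Suppose $M = \lambda X + (1-\lambda)Y$ with $X,Y \in B_{g,r\ast}^{1}$ and $\lambda \in (0,1)$. The norm ordering \cref{eq:norm_ineq} yields $\|X\|_g \leq \|X\|_{g,r\ast} \leq 1$ and likewise $\|Y\|_g \leq 1$. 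Then the chain $1 = \|M\|_g \leq \lambda\|X\|_g + (1-\lambda)\|Y\|_g \leq 1$ must hold with equality everywhere, forcing $\|X\|_g = \|Y\|_g = 1$ together with equality in the convexity inequality for $\|\cdot\|_g$. The strict-convexity assumption then rules out $X$ and $Y$ being distinct, so $X = Y$ and hence $M = X = Y$, proving $M$ is extreme.

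The main obstacle is not a deep one but lies in correctly threading the two structural facts about low-rank inducing norms into the equality case of the convexity inequality: namely that membership in $B_{g,r\ast}^{1}$ controls $\|\cdot\|_g$ from above via \cref{eq:norm_ineq}, and that a rank-$\leq r$ point on the $\|\cdot\|_g$-unit sphere sits on the boundary of $B_{g,r\ast}^{1}$ via \cref{eq:rank_norm}. Once these are in place, the strict-convexity hypothesis activates cleanly; the first inclusion is the routine Milman-type argument requiring only the compactness remark.
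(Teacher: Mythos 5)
Your proof is correct, and it uses the hypothesis exactly where the paper does --- strict convexity of $\|\cdot\|_g$ on its unit sphere rules out nontrivial convex combinations of distinct unit-norm points --- but the surrounding structure is genuinely different and more complete. The paper's entire argument is one step: any convex combination $\bar{M}=\sum_i\alpha_i M_i$ of distinct points $M_i\in E_{g,r}$ with $\alpha_i\in(0,1)$ satisfies $\|\bar{M}\|_g<1$, hence $\bar{M}\notin E_{g,r}$; it then concludes directly from $\conv(E_{g,r})=B_{g,r\ast}^1$ (\cref{lem:convhull}) that $E_{g,r}$ is the extreme point set, leaving implicit both the Milman-type inclusion (extreme points of the hull lie in $E_{g,r}$) and the bookkeeping needed to see that a non-extreme $M\in E_{g,r}$ really is a nontrivial combination of \emph{distinct} points of $E_{g,r}$ (one must decompose the two ball points through \cref{lem:convhull} and regroup repeated atoms). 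You make both inclusions explicit, and your extremality direction takes a different route: rather than pushing $X,Y\in B_{g,r\ast}^1$ down to $E_{g,r}$ via the hull decomposition, you control them directly with the norm ordering \cref{eq:norm_ineq}, getting $\|X\|_g\leq\|X\|_{g,r\ast}\leq 1$, then force equality throughout $1=\|M\|_g\leq\lambda\|X\|_g+(1-\lambda)\|Y\|_g\leq 1$ and apply the hypothesis to the pair $X,Y$ itself. This buys a self-contained equality-case analysis with no decomposition or regrouping; the paper's version buys brevity at the cost of leaving those steps to the reader. Both arguments are valid.
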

All $\ell_p$ norms with $1<p<\infty$ satisfy these assumptions, and therefore the unit balls of their low-rank inducing norms have $E_{g,r}$ as their extreme point sets. The extreme point sets for the unit balls of the low-rank inducing spectral norms are characterized next.
\begin{cor}\label{prp:rank_r_ind_spectral}
The extreme point set of the unit ball to the low-rank inducing spectral norm $B_{\ell_{\infty},r*}^1$ is given by
\begin{equation*}
\mathcal{E}_r := \{X\in\Rmn: \sigma_1(X)=\cdots=\sigma_r(X)=1  \ \text{ and } \ \rk(X) = r\}.
\end{equation*}
\end{cor}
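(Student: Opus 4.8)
The plan is to identify the extreme points directly, because the spectral norm does \emph{not} satisfy the strict convexity hypothesis of \cref{prp:rank_r_ind_str_conv}: for $g=\ell_\infty$ one has $\|X\|_g=\sigma_1(X)$, so $E_{\ell_\infty,r}=\{X\in\Rmn:\sigma_1(X)=1,\ \rk(X)\le r\}$, and this set is strictly larger than the claimed extreme point set $\mathcal{E}_r$ (it contains rank-deficient matrices and matrices with $\sigma_r<1$). First I would record that, by \cref{lem:convhull}, $B_{\ell_\infty,r*}^1=\conv(E_{\ell_\infty,r})$, and that $E_{\ell_\infty,r}$ is compact, being the intersection of the spectral-norm unit sphere with the closed set $\{\,\rk\le r\,\}$. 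Milman's partial converse to the Krein--Milman theorem then guarantees that every extreme point of $B_{\ell_\infty,r*}^1$ lies in $E_{\ell_\infty,r}$, so it remains to decide which elements of $E_{\ell_\infty,r}$ are extreme.

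Next I would show that no $X\in E_{\ell_\infty,r}\setminus\mathcal{E}_r$ is extreme, by exhibiting an explicit segment in $E_{\ell_\infty,r}\subseteq B_{\ell_\infty,r*}^1$ through $X$. Writing the SVD $X=\sum_i\sigma_i(X)\,u_iv_i^\transp$, there are two cases. If $\rk(X)=k<r$, I would choose unit vectors $u_{k+1},v_{k+1}$ orthogonal to the existing left/right singular vectors and set $X_\pm:=X\pm\varepsilon\,u_{k+1}v_{k+1}^\transp$ for small $\varepsilon\in(0,1]$; this keeps $\sigma_1=1$ and $\rk\le r$, so $X_\pm\in E_{\ell_\infty,r}$ and $X=\tfrac12(X_++X_-)$ with $X_+\neq X_-$. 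If instead $\rk(X)=r$ but $\sigma_r(X)<1$, I would perturb the smallest singular value, $X_\pm:=\sum_{i<r}\sigma_i(X)u_iv_i^\transp+(\sigma_r(X)\pm\varepsilon)u_rv_r^\transp$ with $\varepsilon$ small enough that $0<\sigma_r(X)-\varepsilon$ and $\sigma_r(X)+\varepsilon\le1$; again $X_\pm\in E_{\ell_\infty,r}$ and $X$ is their midpoint. In both cases $X$ is a nontrivial convex combination, hence not extreme.

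Finally I would prove that every $X\in\mathcal{E}_r$ is an exposed (hence extreme) point. Using $X=\sum_{i=1}^r u_iv_i^\transp$, I would take the functional induced by $Y:=\tfrac1r\sum_{i=1}^r u_iv_i^\transp$, which lies on the dual unit sphere since $\normkyr{Y}=\sum_{i=1}^r\tfrac1r=1$ and satisfies $\langle Y,X\rangle=1$. For any $M\in E_{\ell_\infty,r}$ one has $\langle Y,M\rangle=\tfrac1r\sum_{i=1}^r u_i^\transp Mv_i\le\tfrac1r\sum_{i=1}^r\sigma_1(M)=1$, because $\sigma_1(M)=1$ and $u_i^\transp Mv_i\le\|Mv_i\|\le\sigma_1(M)$. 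Equality forces $u_i^\transp Mv_i=1=\sigma_1(M)$ for every $i$, and the Cauchy--Schwarz equality case then gives $Mv_i=u_i$ and, symmetrically, $M^\transp u_i=v_i$. Since $\{u_i\}$ are orthonormal, $\rk(M)\ge r$, hence $\rk(M)=r$ and the range of $M$ equals $\mathrm{span}\{u_i\}$; combined with $M^\transp u_i=v_i$ this yields $M-X=0$ (its range lies in $\mathrm{span}\{u_i\}$ yet is orthogonal to it), so $M=X$. Thus $X$ is the unique maximizer of $\langle Y,\cdot\rangle$ over $E_{\ell_\infty,r}$, and since $B_{\ell_\infty,r*}^1=\conv(E_{\ell_\infty,r})$ with every point a finite convex combination (Carath\'eodory in the finite-dimensional space $\Rmn$), it is the unique maximizer over the whole ball, i.e.\ an exposed point.

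The main obstacle is this last step: with strict convexity unavailable, extremality of the points of $\mathcal{E}_r$ cannot be read off from \cref{prp:rank_r_ind_str_conv} and must instead be certified by constructing an exposing functional and verifying uniqueness of the maximizer. The crux is that the Cauchy--Schwarz (equivalently von Neumann trace-inequality) equality conditions, together with the rank bound $\rk(M)\le r$ carried by membership in $E_{\ell_\infty,r}$, are exactly what pin down $M=X$; by contrast, ruling out the non-extreme points only requires the routine rank-one perturbations of the second step.
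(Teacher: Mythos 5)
Your proof is correct, but it follows a genuinely different route from the paper's. The paper does not pass through $E_{\ell_\infty,r}$ at all: it first sharpens the hull representation by showing $\conv(\mathcal{E}_r)=B_{\ell_{\infty},r*}^1$ directly, computing the support function of $\conv(\mathcal{E}_r)$ via von Neumann's trace inequality and matching it with $\|\cdot\|_{\ell_1,r}$, the support function of the ball from \cref{lem:convhull}. This immediately confines the extreme points to the smaller set $\mathcal{E}_r$ (the Milman-type step you invoke is left implicit there), and it replaces your case analysis with rank-one and singular-value perturbations on $E_{\ell_\infty,r}\setminus\mathcal{E}_r$. For the converse inclusion, where you build an exposing functional $Y=\tfrac1r\sum_i u_iv_i^\transp$ and work through the Cauchy--Schwarz equality conditions, the paper instead uses a two-line trick: every $M\in\mathcal{E}_r$ lies on the sphere $\|M\|_F^2=r$, and since $\|\cdot\|_F^2$ is strictly convex, any nontrivial convex combination of distinct points of $\mathcal{E}_r$ satisfies $\|\cdot\|_F^2<r$ and hence leaves $\mathcal{E}_r$ --- the same mechanism as in \cref{prp:rank_r_ind_str_conv}, but with the Frobenius norm as the witness function rather than the norm defining the ball. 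The trade-off: the paper's argument is considerably shorter and reuses existing machinery, while yours proves strictly more --- each point of $\mathcal{E}_r$ is shown to be \emph{exposed}, not merely extreme, and your explicit segments through points of $E_{\ell_\infty,r}\setminus\mathcal{E}_r$ make concrete why the strict-convexity hypothesis of \cref{prp:rank_r_ind_str_conv} cannot simply be dropped. Both arguments are sound; one minor presentational point is that your final step (uniqueness of the maximizer over the whole ball via Carath\'eodory) and the Milman step deserve the explicit justification you give them, since they carry the logical weight that the paper's refined hull $\conv(\mathcal{E}_r)=B_{\ell_{\infty},r*}^1$ absorbs automatically.
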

This result is proven in \Cref{app:prp_rank_r_ind_spectral}.

We could also use the nuclear norm as a basis for the low-rank inducing norm. By \cref{prop:nuclear_norm_characterization}, we know that $\|\cdot\|_{\ell_1,1*}=\|\cdot\|_{\ell_1}$. Therefore \cref{eq:norm_ineq} implies that any low-rank inducing nuclear norm is just the nuclear norm, i.e. $$\|\cdot\|_{\ell_1}=\|\cdot\|_{\ell_1,q*}=\cdots=\|\cdot\|_{\ell_1,1*}.$$ Compared to using the low-rank inducing Frobenius- and spectral norms, this does not provide us with a richer family of low-rank inducing norms.


\section{Optimality Interpretations}
	\label{sec:main}
In this section, we show that low-rank inducing norms can be interpreted as {{convex envelopes}}, i.e. biconjugates of non-convex functions of the form \cref{eq:f_reg_intro}, where the norm is arbitrary but unitarily invariant. Using this interpretation, it is demonstrated how optimal convex relaxations of rank constrained optimization problems can be obtained. This yields a posteriori guarantees on when a convex relaxation involving a low-rank inducing norm solves the corresponding rank constrained problem. 

The interpretation of low-rank inducing norms follows as a special case of the following more general result. 

	\begin{thm}
		\label{thm:main}
		Assume $f: \mathbb{R}_{\geq 0} \to \mathbb{R} \cup \{ \infty \}$ is an increasing closed  convex function, and let $f_{\textnormal{reg}} := f(\|\cdot\|_g) + \chi_{\rk(\cdot)  \leq r}$ with $r \in \mathbb{N}$ such that $1 \leq r \leq \minmn$. Then,
		\begin{align}
			f^{\ast}_{\textnormal{reg}} &= f^{+}(\|\cdot\|_{g^D,r}), \label{eq:conj_reg}\\
		f^{\ast \ast}_{\textnormal{reg}} &= f(\|\cdot\|_{g,r\ast}). \label{eq:biconj_reg} 
		\end{align}
	\end{thm}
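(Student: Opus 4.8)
The plan is to establish the conjugate identity \cref{eq:conj_reg} by a direct computation, and then obtain the biconjugate \cref{eq:biconj_reg} by conjugating once more, which reduces the matrix problem to a one-dimensional involution for the monotone conjugate.

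First I would compute $f_{\textnormal{reg}}^\ast$. By definition,
$$f_{\textnormal{reg}}^\ast(Y) = \sup_{X \in \Rmn}\left[\langle X,Y\rangle - f(\|X\|_g) - \chi_{\rk(\cdot)\leq r}(X)\right] = \sup_{\rk(X)\leq r}\left[\langle X,Y\rangle - f(\|X\|_g)\right].$$
The key maneuver is to group the feasible $X$ according to the value $t := \|X\|_g \geq 0$. Since $f(\|X\|_g)$ depends on $X$ only through $t$, the inner maximization reduces to $\sup\{\langle X,Y\rangle : \rk(X)\leq r,\ \|X\|_g = t\}$. Because the rank constraint is invariant under scaling and $\langle\cdot,Y\rangle$ is linear, positive homogeneity gives that this inner supremum equals $t\,\|Y\|_{g^D,r}$, using the definition of the dual rank-constrained norm in \cref{eq:def_rank_ind_norm_dual} (the case $t=0$, i.e. $X=0$, contributes $0$ and is consistent). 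Hence
$$f_{\textnormal{reg}}^\ast(Y) = \sup_{t\geq 0}\left[t\,\|Y\|_{g^D,r} - f(t)\right] = f^{+}(\|Y\|_{g^D,r}),$$
which is exactly \cref{eq:conj_reg}.

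Next I would conjugate once more. Applying the same grouping idea, now indexing $Y$ by $s := \|Y\|_{g^D,r}$ and using that $\|\cdot\|_{g^D,r}$ is a genuine norm (by \cref{lem:norm}) together with the definition of its dual $\|\cdot\|_{g,r\ast}$ in \cref{eq:def_rank_ind_norm}, the inner supremum $\sup\{\langle M,Y\rangle : \|Y\|_{g^D,r}=s\}$ equals $s\,\|M\|_{g,r\ast}$ by homogeneity. This yields
$$f_{\textnormal{reg}}^{\ast\ast}(M) = \sup_{Y}\left[\langle M,Y\rangle - f^{+}(\|Y\|_{g^D,r})\right] = \sup_{s\geq 0}\left[s\,\|M\|_{g,r\ast} - f^{+}(s)\right] = f^{++}(\|M\|_{g,r\ast}),$$
where $f^{++} := (f^{+})^{+}$ is the double monotone conjugate.

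It remains to show the involution $f^{++} = f$ on $\mathbb{R}_{\geq 0}$; since $\|M\|_{g,r\ast}\geq 0$, this finishes the proof of \cref{eq:biconj_reg}. I would prove it by relating the monotone conjugate to the ordinary conjugate: extending $f$ to $F:\mathbb{R}\to\mathbb{R}\cup\{\infty\}$ by $F = f$ on $\mathbb{R}_{\geq 0}$ and $F = +\infty$ on $(-\infty,0)$ gives a proper closed convex function with $F^\ast = f^{+}$, so Fenchel--Moreau yields $F^{\ast\ast} = F$, i.e. $(f^{+})^{\ast} = f$ on $\mathbb{R}_{\geq 0}$. One then checks that, because $f$ is increasing, $f^{+}$ is constant equal to $-f(0)$ on $(-\infty,0]$, so negative arguments never improve the supremum defining $(f^{+})^{+}(z)$ for $z\geq 0$; hence $(f^{+})^{+} = (f^{+})^{\ast} = f$ on $\mathbb{R}_{\geq 0}$. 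I expect the main obstacle to be this last step together with the careful justification of the homogeneity reductions at the boundary values $t=0$, $s=0$ and when $Y=0$ or $M=0$; in particular the monotone-conjugate involution must use the monotonicity of $f$ (not merely convexity and closedness) in order to discard the contribution of negative arguments.
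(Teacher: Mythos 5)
Your proposal is correct, but it takes a genuinely different route from the paper. The paper proves \cref{eq:biconj_reg} \emph{first}, geometrically: it shows the epigraph identity $\epi(f(\|\cdot\|_{g,r\ast})) = \conv(\epi(f_{\textnormal{reg}}))$ and invokes \cref{lem:biconj_closed}, with the crucial inclusion resting on the decomposition of \cref{lem:convhull} (any $M$ is a convex combination of matrices $M_i$ with $\rk(M_i)\leq r$ and $\|M_i\|_g = \|M_i\|_{g,r\ast} = \|M\|_{g,r\ast}$), the reverse inclusion using \cref{eq:rank_norm} and convexity of the composition; the conjugate formula \cref{eq:conj_reg} is then obtained at the end by citing \cite[Theorem~15.3]{rockafellar1970convex} applied to $f(\|\cdot\|_{g,r\ast})$. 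You go in the opposite order: you compute $f_{\textnormal{reg}}^\ast$ directly by stratifying the rank-constrained supremum over the level sets $\|X\|_g = t$, which is precisely where the definition \cref{eq:def_rank_ind_norm_dual} enters, and then conjugate once more, reducing everything to the one-dimensional involution $f^{++}=f$ on $\mathbb{R}_{\geq 0}$, which you correctly derive from Fenchel--Moreau for the extension $F$ together with monotonicity of $f$ (this is where your argument, like the paper's citation of Rockafellar, genuinely needs that $f$ is increasing and not merely closed convex). What each approach buys: yours is more self-contained and elementary, since it bypasses \cref{lem:convhull} entirely, needing from \cref{lem:norm} only that $\|\cdot\|_{g^D,r}$ is a norm (homogeneity, symmetry and definiteness for the sphere-versus-ball reduction) and the duality definition \cref{eq:def_rank_ind_norm}; in effect you re-prove Rockafellar's composition theorem in this setting rather than quote it. The paper's route buys the geometric picture: the convex-hull lemma it leans on is of independent interest (it drives the extreme-point results and the a posteriori optimality interpretation), and the epigraph argument exhibits $f(\|\cdot\|_{g,r\ast})$ as the convex envelope directly without passing through the conjugate. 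Two small points to keep in mind, neither fatal: both proofs implicitly assume $f$ is proper (otherwise $f\equiv\infty$ and everything is degenerate; the paper needs this too, since \cref{lem:biconj_closed} requires $f^{\ast\ast}$ proper), and your sphere-versus-ball identity $\sup\{\langle X,Y\rangle : \rk(X)\leq r,\ \|X\|_g = 1\} = \|Y\|_{g^D,r}$ deserves the one-line justification that the ball maximum, when positive, is attained and can be rescaled outward to the sphere, while symmetry of the sphere handles the case $\|Y\|_{g^D,r}=0$.
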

	\begin{proof}
Since $\epi(f(\|\cdot\|_{g,r\ast}))$ is closed by~\cite[Proposition~IV.2.1.8]{hiriart2013convex}, it follows by \cref{lem:biconj_closed} that if
		\begin{align*}
		\epi(f(\|\cdot\|_{g,r\ast})) = \conv( \epi(f_{\textnormal{reg}})),
		\end{align*}
		then \cref{eq:biconj_reg} follows.

Let us start by showing that $\epi(f(\|\cdot\|_{g,r\ast})) \subset \conv( \epi(f_{\textnormal{reg}})).$ Assume that \linebreak $(M,t) \in \epi(f(\|\cdot\|_{g,r\ast}))$. By \cref{lem:convhull},
\begin{align*}
\textstyle M = \sum_i \alpha_i M_i  \quad \text{with}\quad \sum_i \alpha_i = 1, \ \alpha_i \geq 0
\end{align*}
where $M_i$ satisfies 
\begin{equation*}
\rk(M_i) \leq r,  \quad \text{and} \quad \ \|M_i\|_{g,r\ast} = \|M\|_{g,r\ast}.
\end{equation*}
		Hence, $(M,t) = \sum_i \alpha_i \left(M_i,t\right)$, where
		\begin{equation*}
		 \ t \geq f(\|M\|_{g,r\ast}) = f(\|M_i\|_{g,r\ast}) \quad \text{and} \quad \rk(M_i) \leq r.
		\end{equation*}
		This shows that $(M_i,t)\in\epi(f_{\textnormal{reg}}),$ and therefore $(M,t)\in\conv(\epi(f_{\textnormal{reg}}))$.
		
		Conversely, if  $(M,t) \in \conv( \epi(f_{\textnormal{reg}})),$ then
		\begin{equation*}
	\textstyle	(M,t) = \sum_i \alpha_i \left(M_i,t_i \right) \quad \text{with} \quad \sum_{i} \alpha_i = 1, \alpha_i \geq 0,
		\end{equation*}
		where $M_i$ satisfies 
		\begin{equation*}
		\rk(M_i) \leq r, \quad \text{and} \quad t_i \geq f(\|M_i\|_{g}) = f(\|M_i\|_{g,r\ast}),
		\end{equation*}		
		 where the equality is due to \cref{eq:rank_norm} in \cref{lem:norm}. Since $f$ is convex and increasing, it holds that the composition $f(\|\cdot\|_{g,r\ast})$ is convex (see~\cite[Proposition~IV.2.1.8]{hiriart2013convex}). Thus,
		\begin{align*}
		t := \textstyle{\sum_i} \alpha_i t_i \geq \textstyle{\sum_i} \alpha_i f(\|M_i\|_{g,r\ast}) \geq   f\left( \normrgast{\textstyle{\sum_{i}}\alpha_iM_i}\right) = f\left(\|M\|_{g,r\ast} \right),
		\end{align*}
		which implies that $(M,t) \in \epi(f(\|\cdot\|_{g,r\ast})),$ and \cref{eq:biconj_reg} follows. Finally, \cref{eq:conj_reg} is proven by applying \cite[Theorem~15.3]{rockafellar1970convex} to $f(\|\cdot\|_{g,r\ast})$.
	\end{proof}
This result generalizes the corresponding result in \cite{grussler2016low}, in which the special case $f(x)\equiv x^2$ and $\normA{\cdot}{g}$ being the Frobenius norm is considered. For linear $f(x)\equiv x$, the biconjugate in \cref{eq:biconj_reg} reduces to the low-rank inducing norms of \Cref{sec:norms}. For the low-rank inducing Frobenius- and spectral norms, as well as for the nuclear norm, this yields the following characterizations.
\begin{cor}
Let $r\in \mathbb{N}$ be such that $1 \leq r \leq q:=\minmn$. Then
\begin{align*}
\| \cdot \|_{\ell_2,r\ast} &= (\|\cdot\|_F+\chi_{\rk(\cdot)\leq r})^{**},\\
\| \cdot \|_{\ell_\infty,r\ast} &= (\|\cdot\|_{\ell_\infty}+\chi_{\rk(\cdot)\leq r})^{**},
\end{align*}
and the nuclear norm satisfies
\begin{align*}
\|\cdot\|_{\ell_{1}} &= (\|\cdot\|_g+\chi_{\rk(\cdot)\leq 1})^{**},
\end{align*}
where $\|\cdot\|_g$ is an arbitrary unitarily invariant norm that satisfies $\|M\|_{g}=\sigma_1(M)$ for all rank-1 matrices $M$.
\end{cor}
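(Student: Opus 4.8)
The plan is to recognize this corollary as three instances of \cref{thm:main} specialized to the identity function $f(x) = x$. The first step is to verify that $f(x)=x$, viewed as a function $\mathbb{R}_{\geq 0} \to \mathbb{R}$, satisfies the hypotheses of \cref{thm:main}: it is convex because it is linear, its epigraph $\{(x,t): x \geq 0,\ t \geq x\}$ is closed, and it is increasing in the sense defined in \Cref{sec:prelim} (it is nondecreasing and, e.g., $f(0) < f(1)$ furnishes the required strict increase). Hence \cref{eq:biconj_reg} applies and, since composing with the identity leaves the low-rank inducing norm unchanged, gives
$$\left(\|\cdot\|_g + \chi_{\rk(\cdot)\leq r}\right)^{\ast\ast} = f(\|\cdot\|_{g,r\ast}) = \|\cdot\|_{g,r\ast}$$
for every symmetric gauge function $g$ and every admissible $r$.

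The next step is to match each line of the corollary to a specific choice of $g$. For the first identity I would take $g = \ell_2$, so that $\|\cdot\|_g = \|\cdot\|_F$ and $\|\cdot\|_{g,r\ast} = \|\cdot\|_{\ell_2,r\ast}$; the displayed formula is then exactly the $g=\ell_2$ instance of the display above. For the second identity I would take $g = \ell_\infty$, so that $\|\cdot\|_g = \|\cdot\|_{\ell_\infty}$ is the spectral norm and $\|\cdot\|_{g,r\ast} = \|\cdot\|_{\ell_\infty,r\ast}$, again reading off the corresponding instance.

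For the nuclear norm I would set $r = 1$ and let $g$ be any unitarily invariant norm with $\|M\|_g = \sigma_1(M)$ for all rank-1 $M$, equivalently $g(\sigma_1) = |\sigma_1|$. The identity-function instance of \cref{eq:biconj_reg} then yields $(\|\cdot\|_g + \chi_{\rk(\cdot)\leq 1})^{\ast\ast} = \|\cdot\|_{g,1\ast}$, so it only remains to identify $\|\cdot\|_{g,1\ast}$ with the nuclear norm. This is precisely the content of \cref{prop:nuclear_norm_characterization}, which gives $\|\cdot\|_{\ell_1} = \|\cdot\|_{g,1\ast}$ under exactly the stated hypothesis on $g$, completing the argument.

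Since every substantive step is already packaged in \cref{thm:main} and \cref{prop:nuclear_norm_characterization}, I do not expect any real obstacle here; the only point warranting a moment's care is confirming that the identity function qualifies as \emph{increasing} under the paper's definition, which additionally demands a strict increase at some pair of points — satisfied trivially as noted above.
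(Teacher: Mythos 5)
Your proposal is correct and follows essentially the same route as the paper, which also obtains all three identities by specializing \cref{thm:main} to the linear function $f(x)\equiv x$ and invoking \cref{prop:nuclear_norm_characterization} for the nuclear-norm case. Your explicit verification that the identity map satisfies the paper's definition of an increasing, closed, convex function is a detail the paper leaves implicit, but it is the same argument.
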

\begin{proof}
Since $\|\cdot\|_{\ell_2}=\|\cdot\|_F$ is the Frobenius norm, this follows immediately from \cref{thm:main,prop:nuclear_norm_characterization}.
\end{proof}

\begin{rem}
This nuclear norm representation differs from the one in \cite{fazel2001rank,fazel2002matrix}, where it is shown that $\|\cdot\|_{\ell_1} + \chi_{B_{\ell_{\infty}}^1}=(\rk+\chi_{B_{\ell_{\infty}}^1})^{**}$, i.e. it is the convex envelope of the rank function restricted to the unit spectral norm ball.
\end{rem}
Using \cref{thm:main}, optimal convex relaxations of rank constrained problems
\begin{equation}
\begin{aligned}
& \underset{M}{\textnormal{minimize}}
& & f_0(M)+f(\|M\|_g)\\
& \textnormal{subject to}
& & \rk(M) \leq r,
\end{aligned}
\label{eq:low_rank_prob_main}
\end{equation}
can be provided, where $f_0: \funcdom$ is a proper and closed convex function and $f: \mathbb{R}_{\geq 0} \to \mathbb{R} \cup \{ \infty \}$ is an increasing and closed convex function. The problem in \cref{eq:low_rank_prob_main} is equivalent to minimizing $f_0+f_{\textnormal{reg}}$ with the non-convex $f_{\textnormal{reg}}$ defined in \cref{thm:main}. Therefore, the optimal convex relaxation of \cref{eq:low_rank_prob_main} is given by
\begin{align}
\underset{M}{\textnormal{minimize}}\quad f_0(M)+f(\|M\|_{g,r*}).
\label{eq:opt_conv_relax}
\end{align}
Including an additional regularization parameter $\theta \geq 0$ (that can be included in $f$) yields the following proposition.
	\begin{prop}
		\label{prop:opt_reg}
		Assume that $f_0: \funcdom$ is a proper closed convex function, and that $r \in \mathbb{N}$ is such that $1 \leq r \leq \minmn$. Let $f: \mathbb{R}_{\geq 0} \to \mathbb{R} \cup \{ \infty \}$ be an increasing, proper closed convex function, and let $\theta \geq 0$. Then
		\begin{align}
		\inf_{\stackrel{M \in \Rmn}{\rk(M) \leq r}} \left[ f_0(M) + \theta f(\|M\|_g) \right] &\geq \inf_{M \in \Rmn} \left[f_0(M) + \theta f(\|M\|_{g,r\ast}) \right] \label{eq:fenchel_inequ_main}.
		\end{align}
If $M^\opts$ solves the problem on the right such that $\rk(M^\opts) \leq r$, then equality holds, and $M^\opts$ is also a solution to the problem on the left. 
	\end{prop}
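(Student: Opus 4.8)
The plan is to recognize \cref{eq:fenchel_inequ_main} as a direct instance of the general convex-relaxation inequality in \cref{lem:zero-dual}, applied to a splitting whose nonconvex summand is precisely the regularizer analysed in \cref{thm:main}. Concretely, I would take $f_1 := f_0$, which is proper, closed and convex by assumption, and set the (generally nonconvex) second summand to be
\begin{equation*}
f_2 := \theta f(\|\cdot\|_g) + \chi_{\rk(\cdot) \leq r}.
\end{equation*}
With this choice, the left-hand side of \cref{eq:fenchel_inequ_main} equals $\inf_{M}[f_1(M)+f_2(M)]$, since the indicator $\chi_{\rk(\cdot)\leq r}$ encodes the rank constraint (points with $\rk(M)>r$ contribute $+\infty$ and drop out).

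The key step is to identify $f_2^{\ast\ast}$. For $\theta>0$ the scaled function $\theta f$ is again increasing, proper, closed and convex, so $f_2$ is exactly the regularizer of \cref{thm:main} with $\theta f$ in place of $f$; hence \cref{eq:biconj_reg} yields $f_2^{\ast\ast} = \theta f(\|\cdot\|_{g,r\ast})$. Feeding this into \cref{lem:zero-dual} gives
\begin{equation*}
\inf_{M}\left[f_0(M)+f_2(M)\right] \geq \inf_{M}\left[f_0(M)+\theta f(\|M\|_{g,r\ast})\right],
\end{equation*}
which is precisely \cref{eq:fenchel_inequ_main}. The degenerate case $\theta=0$ I would dispatch separately, since there $f_2=\chi_{\rk(\cdot)\leq r}$ and the claim reduces to $\inf_{\rk(M)\leq r}f_0(M)\geq\inf_{M}f_0(M)$, which holds trivially because the right-hand infimum ranges over a superset of the feasible region.

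For the equality statement I would invoke the second assertion of \cref{lem:zero-dual}: it suffices to verify $f_2(M^\opts)=f_2^{\ast\ast}(M^\opts)$ at a right-hand minimizer $M^\opts$ with $\rk(M^\opts)\leq r$. Since the rank constraint is satisfied, $\chi_{\rk(\cdot)\leq r}(M^\opts)=0$, and \cref{eq:rank_norm} in \cref{lem:norm} gives $\|M^\opts\|_g=\|M^\opts\|_{g,r\ast}$; therefore $f_2(M^\opts)=\theta f(\|M^\opts\|_g)=\theta f(\|M^\opts\|_{g,r\ast})=f_2^{\ast\ast}(M^\opts)$. \cref{lem:zero-dual} then forces equality in \cref{eq:fenchel_inequ_main} and makes $M^\opts$ a minimizer of the left-hand problem as well.

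Since the argument is essentially an assembly of two results already in hand, I do not anticipate a genuine obstacle; the only points demanding care are bookkeeping. I would need to confirm that $f_2$ is proper so that the biconjugate machinery underlying \cref{lem:zero-dual,thm:main} applies, which follows from properness of $f$ together with the existence of an admissible low-rank matrix in $\dom(f_2)$. I would also isolate the $\theta=0$ case, which falls outside the scope of \cref{thm:main} because a constant function is not \emph{increasing} in the sense defined in \cref{sec:prelim}, and treat it by the trivial feasible-set inclusion noted above.
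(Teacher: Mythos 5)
Your proof is correct and follows essentially the same route as the paper's: both identify $\theta f(\|\cdot\|_{g,r\ast})$ as the biconjugate of the rank-restricted regularizer via \cref{thm:main}, use the resulting minorization $f_{\textnormal{reg}}^{\ast\ast}\leq f_{\textnormal{reg}}$ for the inequality, and invoke \cref{eq:rank_norm} of \cref{lem:norm} at a rank-feasible minimizer to get equality. Your explicit routing through \cref{lem:zero-dual} and your separate handling of $\theta=0$ (which the paper glosses over by ``including $\theta$ in $f$'', even though the zero function is not increasing in the paper's sense) are careful bookkeeping on top of the same argument, not a different one.
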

\begin{proof}
The inequality holds since $f(\|\cdot\|_{g,r*})=f_{\textnormal{reg}}^{**}\leq f_{\textnormal{reg}}$. From \cref{lem:norm} it follows that if $\rk(M^\opts)\leq r$ then $$f_{\textnormal{reg}}^{**}(M^\opts)=f(\|M^\opts\|_{g,r*})=f(\|M^\opts\|_g)=f_{\textnormal{reg}}(M^\opts),$$ which implies that the lower bound is attained with $M^\opts$ and the equality is valid.
\end{proof}

Since the nuclear norm is obtained by creating a low-rank inducing norm with $r=1$, we conclude that any nuclear norm regularized problem can be interpreted as an optimal convex relaxation to a non-convex problem of the form \cref{eq:low_rank_prob_main} with the constraint $\rk(M)\leq 1$. 

{{\cref{prop:opt_reg} can also be applied to the Frobenius norm approximation problem}
\begin{align*}
\inf_{\stackrel{M \in \Rmn}{\rk(M) \leq r}} \|N-M\|_F^2+h(M) =\inf_{\stackrel{M \in \Rmn}{\rk(M) \leq r}} \left[\|N\|_F^2 - 2\langle N,M \rangle + \|M\|_F^2 + h(M)\right],
\end{align*}
where $h$ is a proper convex function. Letting
\begin{equation*}
f_0(\cdot) = \|\cdot\|_F^2 - 2\langle N,\cdot \rangle+ h(\cdot), \quad f(x)=x^2, \quad \text{and} \quad \|\cdot\|_{g}=\|\cdot\|_F,
\end{equation*} yields

 	\begin{align*}
 	\min_{\stackrel{M \in \Rmn}{\rk(M) \leq r}} &\left[\|N-M\|_F^2 + h(M) \right] \geq \min_{{M \in \Rmn}} \left[\|N\|_F^2 - 2 \langle N,M \rangle+\|M\|_{\ell_2,r\ast}^2 + h(M) \right].
 	\end{align*} 
For $h = 0$, the solutions of the non-convex problem are determined by the celebrated Schmidt-Mirsky Theorem (see~\cite[Corollary~4.12]{stewart1990matrix}) as 
\begin{align*}
\svd_r(N) := \left\lbrace X = \sum_{i=1}^{r} \sigma_i(N) u_i v_i^\transp: N = \sum_{i=1}^{q} \sigma_i(N) u_i v_i^\transp \text{ is an SVD of } N\right\rbrace,
\end{align*}
where SVD stands for singular value decomposition. By \cref{lem:convhull} we have that \linebreak $\|X\|_{\ell_2,r\ast} = \|X\|_{F}$ for all $X \in \svd_r(N)$. This implies that the minima of the best convex relaxation are given by $\conv(\svd_r(N))$. Note that there is no analogue case for the spectral norm, because the spectral norm is not separable.}

	\section{Computability}
	\label{sec:comp}

This section addresses the computability of convex optimization problems involving low-rank inducing regularizers of the form $f(\normrgast{\cdot})$. We restrict ourselves to low-rank inducing Frobenius- and spectral norm regularizers. A requirement for the optimal convex relaxation problem in \cref{eq:opt_conv_relax} to be solved efficiently, is that these regularizers are suitable for numerical optimization.

Assuming that $f_0$ and $f$ are SDP representable, it is shown that \cref{eq:opt_conv_relax} can be solved via semi-definite programming. To be able to solve larger problems using first-order proximal splitting methods (see~\cite{combettes2011proximal,parikh2014proximal}), it is shown in \cite{grussler2016lowrank} how to efficiently compute the proximal mappings of our regularizers. The computational cost of computing these proximal mappings is comparable to the cost of computing the proximal mapping for the nuclear norm, since the cost in all cases is dominated by the singular value decomposition. 

In order to deal with increasing convex functions $f$ in \cref{eq:opt_conv_relax}, the problem is rewritten into the equivalent epigraph form
\begin{equation}\label{eq:epi_form}
\underset{M,v}{\textnormal{minimize~}} f_0(M)+f(v)+\chi_{\epi(\normrgast{\cdot})}(M,v).
\end{equation}

\subsection{SDP representation}
The low-rank inducing Frobenius norm and spectral norm
\begin{align}
	\| M \|_{\ell_2,r\ast} &:= \max_{\|Y \|_{\ell_2,r} \leq 1}  \langle M,Y \rangle =   \max_{\|Y \|_{\ell_2,r}^2 \leq 1}  \langle M,Y \rangle, \label{eq:rast_SDP}\\
	\normkyrast{M} &:= \max_{\normkyr{Y} \leq 1}  \langle M,Y \rangle\label{eq:l1rast_SDP}
	\end{align}	
are SDP representable via $\|Y\|_{\ell_2,r}^2$ and $\normkyr{Y}$. From~\cite{grussler2015optimal,grussler2016low}, it is known that
	\begin{equation*}
	\begin{aligned}
	\|Y\|_{\ell_2,r}^2=~& \underset{T,\gamma}{\min}
	& & \trace(T) - \gamma (n-r)\\
	& \textnormal{s.t.}
	& & \begin{pmatrix}
	T &Y \\
	Y^{\transp} & I
	\end{pmatrix} \succeq 0, \ T \succeq \gamma I.
	\end{aligned}
	\end{equation*}
	Similarly, one can verify that
	\begin{equation*}
	\begin{aligned}
	\normkyr{Y}=~& \underset{T_1,T_2,\gamma}{\min }
	& & \frac{1}{2}[\trace(T_1) + \trace(T_2) - (n+m-2r)\gamma ]\\
	& \textnormal{s.t.}
	& & \begin{pmatrix}
	T_1 &Y \\
	Y^{\transp} & T_2
	\end{pmatrix} \succeq 0, \ T_1, T_2 \succeq \gamma I,
	\end{aligned}
	\end{equation*}
	which generalizes the SDP representation of $\|Y\|_{\ell_1}$  in~\cite{recht2010guaranteed}. This implies that
		\begin{align*}
		&\begin{aligned}
		\|M\|_{\ell_2,r\ast}=~& \underset{Y,T,\gamma}{\max}
		& & \langle M, Y\rangle\\
		& \textnormal{s.t.}
		& & \begin{pmatrix}
		T &Y \\
		Y^{\transp} & I
		\end{pmatrix} \succeq 0, \ T \succeq \gamma I,\\
		& & & \trace(T) - \gamma (n-r) \leq 1,
		\end{aligned}\\[4mm]
		&\begin{aligned}
		\|M\|_{\ell_{\infty},r*}=~& \underset{Y,T_1,T_2,\gamma}{\max}
		& & \langle M, Y\rangle\\
		& \textnormal{s.t.}
		& & \begin{pmatrix}
		T_1 &Y \\
		Y^{\transp} & T_2
		\end{pmatrix} \succeq 0, \ T_1,T_2 \succeq \gamma I,\\
		& & & \frac{1}{2}[\trace(T_1) + \trace(T_2) - (n+m-2r)\gamma ]\leq 1.
		\end{aligned}
		\end{align*}
However, these formulations cannot be used in convex optimization problems with $M$ as a decision variable due to the inner product $\langle M, Y\rangle$. Therefore, we use duality to arrive at
		\begin{align*}
		&\begin{aligned}
		\|M\|_{\ell_2,r\ast}=~& \underset{W_1,W_2,k}{\min }
		& & \frac{1}{2}(\trace(W_2)+k)\\
		& \textnormal{s.t.}
		& & \begin{pmatrix}
		kI-W_1 & M \\
		M^{\transp} & W_2
		\end{pmatrix} \succeq 0, \ W_1 \succeq 0, \\
		& & & \trace(W_1) = (n-r)k;
		\end{aligned}\\[4mm]
		&\begin{aligned}
		\|M\|_{\ell_{\infty},r*}=~& \underset{W_1,W_2,k}{\min }
		& & k\\
		& \textnormal{s.t.}
		& & \begin{pmatrix}
		kI-W_1 & M \\
		M^{\transp} & kI-W_2
		\end{pmatrix} \succeq 0, \ W_1,W_2 \succeq 0, \\
		& & & \trace(W_1) + \trace(W_2) = [(n-r)+(m-r)]k.
		\end{aligned}
		\end{align*}
These formulations can be used to, e.g. solve problems on the epigraph form \cref{eq:epi_form} by enforcing the respective costs to be smaller than or equal to $v\in\mathbb{R}$. This gives constraints of the form $\|M\|_{g,r*}\leq v$, i.e. $(M,v)\in\epi(\|\cdot\|_{g,r*})$. If $f$ and $f_0$ are SDP representable, then \cref{eq:epi_form} can be solved via semi-definite programming.
	\subsection{Splitting algorithms}

		Conventional SDP solvers are often based on interior point methods (see~\cite{peaucelle2002user,toh2004implementation}). These have good convergence properties, but the iteration complexity typically grows unfavorably with the problem dimension. This limits their application to small or medium scale problems. First order proximal splitting methods (see e.g.~\cite{combettes2011proximal,parikh2014proximal}) typically have a lower complexity per iteration, and are thus more suitable for large problems.
		
These methods require the proximal mapping for all non-smooth parts of the problem to be available and cheaply computable. For closed, proper and convex functions $h: \funcdom$, the \emph{proximal mapping} is defined as
		\begin{align}
		\prox_{\gamma h}(Z) := \argmin_X \left(h(X)+\dfrac{1}{2\gamma} \|X-Z\|^2_F \right). \label{eq:prox_op}
		\end{align}
Applying proximal splitting methods to~\cref{eq:epi_form} therefore requires that the proximal mapping of $\chi_{\epi(\normrgast{\cdot})}$ is readily computable.
Algorithms for efficiently computing these proximal mappings in case of the low-rank inducing Frobenius- and spectral norms are derived in~\cite{grussler2016lowrank} (see~\cite{grussler2018github} for implementations). 
%
%
%
%
%
Finally, the detour over the epigraph projection is not needed for all increasing functions. The proximal mapping for the low-rank inducing Frobenius- and spectral norms can be derived very similarly to the epigraph case. 	
\section{Examples: Matrix Completion}
\label{sec:compl}
The matrix completion problem seeks to complete a low-rank matrix based on limited knowledge about its entries. The problem is often posed as
	\begin{equation}
	\begin{aligned}
	& {\textnormal{minimize}}
	& & \rk(X)\\
	& \textnormal{subject to}
	& & \hat{x}_{ij} = x_{ij}, \ (i,j) \in \mathcal{I}, \label{prob:mat_compl}	\end{aligned}
	\end{equation}
where $\mathcal{I}$ denotes the index set of the known entries. Another formulation that fits with the low-rank inducing norms proposed in this paper is 
\begin{equation}
	\begin{aligned}
	& {\textnormal{minimize}}
	& & \|X\|_g\\
	& \textnormal{subject to}
        & & \rk(X)\leq r,\\
	& & & \hat{x}_{ij} = x_{ij}, \ (i,j) \in \mathcal{I}, \label{prob:mat_compl_rank_constr}	\end{aligned}
	\end{equation}
where $r$ is the target rank of the matrix to be completed. In the following, two examples of this form will be convexified through low-rank inducing Frobenius- and spectral norms. That is,
\begin{equation}
	\begin{aligned}
	& {\textnormal{minimize}}
	& & \normrgast{X}\\
	& \textnormal{subject to}
	& & \hat{x}_{ij} = x_{ij}, \ (i,j) \in \mathcal{I}, \label{prob:mat_compl_convex}	\end{aligned}
	\end{equation}
is solved for $\normrgast{\cdot} = \normAast{\cdot}{\ell_2,r}$ and $\normrgast{\cdot} = \normAast{\cdot}{\ell_\infty,r}$.

Further, we discuss a covariance completion problem which is a generalization of the problem above. In all problems it will be observed that there are convex relaxations with low-rank inducing norms whose solutions give better completion than the nuclear norm approach.

\subsection{Example 1}
In the first problem, the matrix $\hat{X}$ to be completed is a low-rank approximation of the Hankel matrix
\begin{equation}
H=
\begin{tikzpicture}[baseline=(current bounding box.center)]
\matrix (m) [matrix of math nodes,
			 nodes in empty cells,
			 right delimiter={)},
			 left delimiter={(}]{
	1  	& 	1 	& 	  	&   	& 	1 	& 	1  \\
	1  	& 		& 		& 		&  	    & 	0	\\
		&		&		&		&		&		\\
		&		&		&		&		& 		\\
	1	&		&		&		&		& 	0  	\\
	1   &  	0   &		&		&	0	&	0	\\
} ;
\newdimen\L
\L = .8 pt
\draw[loosely dotted, line width = \L] (m-1-2)-- (m-1-5);
\draw[loosely dotted, line width = \L] (m-6-2)-- (m-6-5);
%
\draw[loosely dotted, line width = \L] (m-2-1)-- (m-5-1);
\draw[loosely dotted, line width = \L] (m-2-6)-- (m-5-6);
%
\draw[loosely dotted, line width = \L] (m-5-1)-- (m-1-5);
\draw[loosely dotted, line width = \L] (m-6-1)-- (m-1-6);
\draw[loosely dotted, line width = \L] (m-6-2)-- (m-2-6);


\end{tikzpicture} \in\mathbb{R}^{10\times 10}.
\label{eq:Hankel}
\end{equation}
Let the singular value decomposition of $H$ be given by $H = \sum_{i=1}^{10} \sigma_i(H) u_i u_i^{\transp}$ and
\begin{equation*}
\hat{X} := \sum_{i=1}^{5} \sigma_i(H) u_i u_i^{\transp} \quad \text{and } \quad \mathcal{I}:=\{(i,j):\hat{x}_{ij}>0\},
\end{equation*}
where $\mathcal{I}$ is the index set of known entries. The cardinality of $\mathcal{I}$ is 78, i.e. 22 out of 100 entries are unknown. The choice of $\hat{X}$ can be understood as follows. {{By the definition of the spectral norm (as the induced norm of the Euclidean norm) it holds that}
\begin{align*}
|\hat{x}_{ij} - h_{ij}| \leq \sigma_{r+1}(H),
\end{align*}
which implies that $|\hat{x}_{ij}| \leq \sigma_{r+1}(H) \text{ for all } (i,j) \in \mathcal{I}$. Thus the majority of the unknown entries can be expected to be of significantly smaller magnitude than the magnitudes of the known elements. Indeed, $64$ out of $78$ known entries have larger magnitude than any of the unknown elements. With such prior knowledge at hand, it seems natural to model \cref{prob:mat_compl_rank_constr} with $\| \cdot \| = \| \cdot\|_F$.
	}

\begin{figure}[t]
	\def\factor{.4}%
	\begin{tikzpicture}
	\begin{groupplot}[group style={group name=my plots, group size=2 by 1,horizontal sep=2.5 cm},width=\factor \textwidth]
	\nextgroupplot[xlabel=$r$,
	ylabel=$\dfrac{\|X-\hat{X}\|_F}{\|X\|_F}$,
	ymin=0,
	grid = both,
	xmin = 1,
	xmax = 10,
	xtick = {1,...,10},
	ytick = {1e-12,1e-8,1e-4,1e0},
	scaled ticks=false, 
	ymode = log,
	tick label style={/pgf/number format/fixed}]
	\addplot+[color = FigColor1 ,mark options = {solid}] file{compl_rastbest_err_rast.txt};
	\label{line:compl_rastbest_err_rast}
	\addplot+[color = FigColor2 ,mark options = {solid}, dashed] file{compl_rastbest_err_spec.txt};
	\label{line:compl_rastbest_err_spec}
	\coordinate (nl) at (current axis.north);
	\nextgroupplot[xlabel=$r$,
	ylabel=$\rk(X)$,
	ymin=5,
	ymax=10,
	ytick = {5,6,7,8,9,10},
	grid = both,
	xmin = 1,
	xmax = 10,
	xtick = {1,...,10},
	scaled ticks=false, 
	tick label style={/pgf/number format/fixed}]
	\addplot+[color = FigColor1 ,mark options = {solid}] file{compl_rastbest_rank_rast.txt};
	\label{line:compl_rastbest_rank_rast}
	\addplot+[color = FigColor2 ,mark options = {solid},dashed] file{compl_rastbest_rank_spec.txt};
	\label{line:compl_rastbest_rank_spec}
	\coordinate (nl) at (current axis.north);
	\end{groupplot}
	
	\node[text width=5cm ,align=left,anchor=north] at ([yshift=-7mm]my plots c1r1.south) {\subcaption{Relative completion errors of: \newline
			\ref{line:compl_rastbest_err_rast} \cref{prob:mat_compl_convex} with $\normrgast{\cdot}=\| \cdot \|_{\ell_2,r\ast}$\newline
			\ref{line:compl_rastbest_err_spec} \cref{prob:mat_compl_convex} with $\normrgast{\cdot}=\normkyrast{\cdot}$
			\label{fig:err_rastbest}}};%
	\node[text width=5cm ,align=left,anchor=north] at ([yshift=-7mm]my plots c2r1.south) 
	{\subcaption{Rank of the solutions to: \newline
			\ref{line:compl_rastbest_rank_rast} \cref{prob:mat_compl_convex} with $\normrgast{\cdot}=\| \cdot \|_{\ell_2,r\ast}$\newline
			\ref{line:compl_rastbest_rank_spec} \cref{prob:mat_compl_convex} with $\normrgast{\cdot}=\normkyrast{\cdot}$
			\label{fig:rank_rastbest}}};%
	\end{tikzpicture}
	\caption{Example 1: Relative completion error and ranks of the solution to \cref{prob:mat_compl_convex} with $\normrgast{\cdot}=\| \cdot \|_{\ell_2,r\ast}$ and $\normrgast{\cdot}=\normkyrast{\cdot}$. \label{fig:rastbest}}
\end{figure}

\Cref{fig:rastbest} shows the completion errors and ranks of the completed matrices for different values of $r$. The nuclear norm ($r=1$) returns a full rank matrix and gives a worse completion error than all other low-rank inducing Frobenius norms. For $r=5$, the solution with the low-rank inducing Frobenius norm has rank 5. Given the known entries, this is the matrix of smallest Frobenius norm which has at most rank 5, by \cref{prop:opt_reg}. As indicated by the small relative error, this matrix coincides with $\hat{X}$. 

Notice that 
$$226 = 3 r (20 - r) + 1 \gg \card(\mathcal{I}) = 78,$$
which is why exact completion results for the nuclear norm (see~\cite[Proposition
3.11]{chandrasekaran2012convex}) do not apply. Finally, note that the low-rank inducing spectral norm shows no improvement in comparison with the nuclear norm.

\subsection{Example 2}
In this second example, we let
\begin{equation*}
\hat{X} := \sum_{j=1}^{5} \sigma_j(H) \sum_{i=1}^{5} u_i v_i^{\transp} \quad \text{and} \quad \mathcal{I}:=\{(i,j):\hat{x}_{ij}>0\},
\end{equation*}
where $H$ is given in~\cref{eq:Hankel} with the singular value decomposition $H = \sum_{i=1}^{10} \sigma_i(H) u_i v_i^{\transp}$. The cardinality of $\mathcal{I}$ is 67, that is, 33 out of 100 entries are unknown. {{By construction of $\hat{X}$ it follows by \cref{prp:rank_r_ind_spectral,prp:rank_r_ind_str_conv} that $\hat{X}$ is an extreme point of the balls
\begin{align*}
\lbrace X: \|X\|_{\ell_\infty,5\ast} \leq \|\hat{X}\|_{\ell_\infty}  \rbrace \quad \text{and} \quad \lbrace X: \|X\|_{\ell_2,5\ast} \leq \|\hat{X}\|_{F} \rbrace.
\end{align*}
In contrast to Example 1, the unknown entries are this time of comparable magnitude as the known ones, which is why modeling \cref{prob:mat_compl_rank_constr} with $\| \cdot \|_{g} = \| \cdot \|_{F}$ does not seem to be a desired choice. Instead, we will see that the choice $\| \cdot \|_{g} = \| \cdot \|_{\ell_\infty}$ is more effective.}}
\begin{figure}[t]
	\def\factor{.4}%
	\begin{tikzpicture}
	\begin{groupplot}[group style={group name=my plots, group size=2 by 1,horizontal sep=2.5 cm},width=\factor \textwidth]
	\nextgroupplot[xlabel=$r$,
	ylabel=$\dfrac{\|X-\hat{X}\|_F}{\|X\|_F}$,
	ymin=0,
	grid = both,
	xmin = 1,
	xmax = 10,
	xtick = {1,...,10},
	scaled ticks=false, 
	tick label style={/pgf/number format/fixed}]
	\addplot+[color = FigColor1 ,mark options = {solid}] file{compl_specbest_err_rast.txt};
	\label{line:compl_specbest_err_rast}
	\addplot+[color = FigColor2 ,mark options = {solid}, dashed] file{compl_specbest_err_spec.txt};
	\label{line:compl_specbest_err_spec}
	\coordinate (nl) at (current axis.north);
	\nextgroupplot[xlabel=$r$,
	ylabel=$\rk(X)$,
	ymin=5,
	ymax=10,
	ytick = {5,6,7,8,9,10},
	grid = both,
	xmin = 1,
	xmax = 10,
	xtick = {1,...,10},
	scaled ticks=false, 
	tick label style={/pgf/number format/fixed}]
	\addplot+[color = FigColor1 ,mark options = {solid}] file{compl_specbest_rank_rast.txt};
	\label{line:compl_specbest_rank_rast}
	\addplot+[color = FigColor2 ,mark options = {solid},dashed] file{compl_specbest_rank_spec.txt};
	\label{line:compl_specbest_rank_spec}
	\coordinate (nl) at (current axis.north);
	\end{groupplot}
	\node[text width=5cm ,align=left,anchor=north] at ([yshift=-7mm]my plots c1r1.south) {\subcaption{Relative completion errors of: \newline
			\ref{line:compl_specbest_err_rast} \cref{prob:mat_compl_convex} with $\|\cdot\|_{g,r*}=\|\cdot\|_{\ell_2,r\ast}$\newline
			\ref{line:compl_specbest_err_spec} \cref{prob:mat_compl_convex} with $\|\cdot\|_{g,r*}=\normkyrast{\cdot}$
			\label{fig:err_specbest}}};%
	\node[text width=5cm ,align=left,anchor=north] at ([yshift=-7mm]my plots c2r1.south) 
	{\subcaption{Rank of the solutions to: \newline
			\ref{line:compl_specbest_rank_rast} \cref{prob:mat_compl_convex} with $\|\cdot\|_{g,r*}=\|\cdot\|_{\ell_2,r\ast}$\newline
			\ref{line:compl_specbest_rank_spec} \cref{prob:mat_compl_convex} with $\|\cdot\|_{g,r*}=\normkyrast{\cdot}$
			\label{fig:rank_specbest}}};%
	\end{tikzpicture}
	\caption{Example 2: Relative completion error and ranks of the solution to \cref{prob:mat_compl_convex} with \newline $\|\cdot\|_{g,r*}=\|\cdot\|_{\ell_2,r\ast}$ and $\|\cdot\|_{g,r*}=\normkyrast{\cdot}$. \label{fig:specbest}}
\end{figure}
\Cref{fig:specbest} shows the completion errors and ranks of the completed matrices with different value of $r$. The nuclear norm ($r=1$) returns a close to full rank matrix with a relative completion error that is among the largest for all $r$. In this example, the low-rank inducing spectral norms perform significantly better than the low-rank inducing Frobenius norms. In particular, for $r=5$, the low-rank inducing spectral norm returns a rank 5 solution. As in the previous examples, this solution is the matrix of smallest spectral norm of rank at most 5, given the known entries (see~\cref{prop:opt_reg}). The zero completion error shows that this matrix coincides with $\hat{X}$. 
 Analogous to the previous example, $$226 = 3 r (20 - r) + 1  \gg \card(\mathcal{I}) = 67,$$
which is why exact completion with the nuclear norm cannot be expected. 

In both examples, the nuclear norm neither produces the lowest rank solution, nor recovers the true matrix. In contrast, other low-rank inducing norms succeed in both aspects. This indicates that the richness in the family of low-rank inducing norms should be exploited to achieve satisfactory performance in rank constrained problems. In practical applications, the 'true' matrix is not known, and this comparison cannot be made. However, cross validation techniques may be used to assess the performance. Finally note that both examples can be scaled to higher dimensions while leaving our conclusions invariant.

\subsection{Covariance Completion}
\label{sec:ex}

In this section, the performance of the low-rank inducing Frobenius- and spectral norms is evaluated by means of a covariance completion problem, which is taken from \cite{zare2014low}. This is a variation of the matrix completion problems above and {{shall illustrate two important features of our low-rank inducing norms for non-ideal situations. Firstly, these norms still give reasonable results; secondly, the choice of norm is crucial, even if other norms return zero duality gaps.}}

Consider the linear state-space system
\begin{align*}
\dot{x}(t) = Ax(t) + Bu(t),
\end{align*}
with $A \in \Rnn$, $B \in \Rmn$, $m\leq n$ and $u(t)$ is a zero-mean stationary stochastic process. For Hurwitz $A$ and reachable $(A,B)$, it has been shown (see \cite{georgiou2002spectral,georgiou2002structure}) that the following are equivalent:
\begin{enumerate}[i.]
	\item $X  := \lim_{t\,\rightarrow \,\infty} \mathbf{E} \left( x(t)\, x^{\transp}(t) \right) \succeq 0$ is the steady-state covariance matrix of $x(t)$, where $\mathbf{E} (\cdot)$ denotes the expected value.
	\item $\exists H \in \mathbb{R}^{m \times n}: AX + XA^{\transp} = -(BH + H^{\transp}B^{\transp})$.
	\item $\rk\begin{pmatrix}
	AX + XA^{\transp} & B\\
	B^{\transp} & 0
	\end{pmatrix} = \rk \begin{pmatrix}
	0 & B\\
	B^{\transp} & 0
	\end{pmatrix}$. 
\end{enumerate}
In particular, $H = \frac{1}{2} \mathbf{E} \left( u(t)\, u^{\transp}(t) \right)  B^{\transp}$ if $u$ is white noise. The problem considered in~\cite{chen2013state,lin2013admm,zare2014low,zare2015alternating,zare2016color} is to reconstruct the partially known covariance matrix $X$ and the input matrix $B$, via $M=-(BH+H^{\transp}B^{\transp})$, where the rank of $M$ sets an upper bound on the rank of $B$, i.e. the number of inputs. The objective is to keep the rank of $M$ low, while achieving satisfactory completion of $X$. In \cite{chen2013state,lin2013admm,zare2014low,zare2015alternating,zare2016color} the problem is addressed by searching for the lowest rank solution:
	\begin{equation}
	\begin{aligned}
	& {\textnormal{minimize}}
	& & \rk(M)\\
	& \textnormal{subject to}
	& & \hat{x}_{ij} = x_{ij}, \ (i,j) \in \mathcal{I}, \label{prob:compl_cov}\\
	& & & A\hat{X} + \hat{X}A^{\transp} = -M,\\
	& & & \hat{X} \succeq 0,
	\end{aligned}
	\end{equation}
where $\mathcal{I}$ denotes set of pairs of indices of known entries. Another option is to search for a low-rank solution, while minimizing the norm of $M$ measured by some unitarily invariant norm. This helps to avoid overfitting, and gives
	\begin{equation}
	\begin{aligned}
	& {\textnormal{minimize}}
	& & \|M\|_g\\
	& \textnormal{subject to} 
        & & \rk(M)\leq r,\\
	& & & \hat{x}_{ij} = x_{ij}, \ (i,j) \in \mathcal{I}, \label{prob:compl_cov_g}\\
	& & & A\hat{X} + \hat{X}A^{\transp} = -M,\\
	& & & \hat{X} \succeq 0.
	\end{aligned}
	\end{equation}
The authors in~\cite{chen2013state,lin2013admm,zare2014low,zare2015alternating,zare2016color} convexify the problem by using the nuclear norm. 
In~\cite{grussler2016covariance}, a similar problem is instead convexified with the low-rank inducing Frobenius norm. We will make a comparison with convex relaxations based on low-rank inducing spectral norms. All these convex relaxations are of the form
\begin{equation}
\begin{aligned}
& {\textnormal{minimize}}
& & \|M\|_{g,r*}\\
& \textnormal{subject to}
& & \hat{x}_{ij} = x_{ij}, \ (i,j) \in \mathcal{I}, \label{opt:compl_cov_r}\\
& & & A\hat{X} + \hat{X}A^{\transp} = -M,\\
& & & \hat{X} \succeq 0,
\end{aligned}
\end{equation}
with the appropriate low-rank inducing norm in the cost.

\subsubsection{Mass-spring-damper system}
The system considered in our example is the so-called \emph{mass-spring-damper system (MSD)} (see~\cite{zare2015alternating,grussler2016covariance}) with $n$ masses (see~\Cref{fig:MSD}). 


\begin{figure}
	\centering
\begin{tikzpicture}[every node/.style={draw,outer sep=0pt,thick},force/.style={>=latex,draw=black,fill=black,thick}]
\tikzstyle{spring}=[thick,decorate,decoration={zigzag,pre length=0.3cm,post length=0.3cm,segment length=4}]
\tikzstyle{damper}=[thick,decoration={markings,  
	mark connection node=dmp,
	mark=at position 0.5 with 
	{
		\node (dmp) [thick,inner sep=0pt,transform shape,rotate=-90,minimum width=8pt,minimum height=3pt,draw=none] {};
		\draw [thick] ($(dmp.north east)+(3pt,0)$) -- (dmp.south east) -- (dmp.south west) -- ($(dmp.north west)+(3pt,0)$);
		\draw [thick] ($(dmp.north)+(0,-2.5pt)$) -- ($(dmp.north)+(0,2.5pt)$);
	}
}, decorate]
\tikzstyle{ground}=[fill,pattern=north east lines,draw=none,minimum width=0.75cm,minimum height=0.3cm]

\node (wall) [ground, rotate=-90, minimum width=2.1cm,yshift=-2cm] {};
\draw (wall.north east) -- (wall.north west);

\node (M) [minimum width=1cm, minimum height=1cm] {$m$};

\node (ground) [ground,anchor=north,yshift=-0.25cm,minimum width=2cm] at (M.south) {};
\draw (ground.north east) -- (ground.north west);

\draw [thick] (M.south west) ++ (0.2cm,-0.125cm) circle (0.125cm)  (M.south east) ++ (-0.2cm,-0.125cm) circle (0.125cm);

\draw [spring] (wall.150) -- ($(M.north west)!(wall.150)!(M.south west)$);
\draw [damper] (wall.25) -- ($(M.north west)!(wall.25)!(M.south west)$);

\draw[force,->] (M.north west) ++ (0,7pt) -- ++(1,0);
\node[above,draw = none, yshift = 21 pt] {$u_1$};

\node (M2) [minimum width=1cm, minimum height=1cm, xshift = 2.35 cm] {$m$};

\node (ground) [ground,anchor=north,yshift=-0.25cm,minimum width=2.75cm] at (M2.south) {};
\draw (ground.north east) -- (ground.north west);

\draw [thick] (M2.south west) ++ (0.2cm,-0.125cm) circle (0.125cm)  (M2.south east) ++ (-0.2cm,-0.125cm) circle (0.125cm);

\draw [spring] ($(M.north east)!(wall.150)!(M.south east)$)--($(M2.north west)!(wall.150)!(M2.south west)$);
\draw [damper] ($(M.north east)!(wall.25)!(M.south east)$)--($(M2.north west)!(wall.25)!(M2.south west)$);

\draw[force,->] (M2.north west) ++ (0,7pt) -- ++(1,0); 
\node[above,draw = none, yshift = 21 pt, xshift = 2.35cm]{$u_2$};

\node (M3) [draw = none,minimum width=1cm, minimum height=1cm, xshift = 4.7 cm] {$\dots$};
\node (ground) [ground,anchor=north,yshift=-0.25cm,minimum width=2.75cm] at (M3.south) {};
\draw (ground.north east) -- (ground.north west);

\draw [spring] ($(M2.north east)!(wall.150)!(M2.south east)$)--($(M3.north west)!(wall.150)!(M3.south west)$);
\draw [damper] ($(M2.north east)!(wall.25)!(M2.south east)$)--($(M3.north west)!(wall.25)!(M3.south west)$);


\node (M4) [minimum width=1cm, minimum height=1cm, xshift = 7.05 cm] {$m$};
\node (ground) [ground,anchor=north,yshift=-0.25cm,minimum width=2cm] at (M4.south) {};
\draw (ground.north east) -- (ground.north west);

\draw [spring] ($(M3.north east)!(wall.150)!(M3.south east)$)--($(M4.north west)!(wall.150)!(M4.south west)$);
\draw [damper] ($(M3.north east)!(wall.25)!(M3.south east)$)--($(M4.north west)!(wall.25)!(M4.south west)$);

\draw [thick] (M4.south west) ++ (0.2cm,-0.125cm) circle (0.125cm)  (M4.south east) ++ (-0.2cm,-0.125cm) circle (0.125cm);

\draw[force,->] (M4.north west) ++ (0,7pt) -- ++(1,0); 
\node[above,draw = none, yshift = 21 pt, xshift = 7cm]{$u_n$};

%
%

\node (wall2) [ground, rotate=-270, minimum width=2.1cm,yshift=-9.05 cm] {};
\draw (wall2.north east) -- (wall2.north west);
\draw [spring] ($(M4.north east)!(wall.150)!(M4.south east)$)--(wall2.30);
\draw [damper] ($(M4.north east)!(wall.25)!(M4.south east)$)--(wall2.155);

\end{tikzpicture}
\caption{Mass-spring-damper system with $n$ masses and input forces $u_1,\dots,u_n$.}
\label{fig:MSD}
\end{figure}

Assuming that the stochastic forcing affects all masses, this yields the following state-space representation
\begin{align*}
\dot{x}(t) &= Ax(t) + B \xi(t)
\end{align*}
with
\begin{align*}
A = \begin{pmatrix}
0 & I\\
-S & -I
\end{pmatrix} \in \mathbb{R}^{2n \times 2n}, \quad B = \begin{pmatrix}
0\\
I
\end{pmatrix} \in \mathbb{R}^{2n \times n}.
\end{align*}
Here, $S$ is a symmetric tridiagonal Toeplitz matrix with $2$ on the main diagonal, $-1$ on the first upper and lower sub-diagonals, and $I$ and $0$ stand for the identity and zero matrices of appropriate size. The state vector $x$ consists of the positions and velocities of the masses, $x = (p,v)$. 
Furthermore, $\xi(t)$ is generated via a low-pass filtered white noise signal $w(t)$ with unit covariance $\mathbf{E} \left( w(t) w(t)^{\transp} \right) = I$ as
\begin{align*}
\dot{\xi}(t) = -\xi(t) + w(t).
\end{align*}
The extended covariance matrix
\begin{align*}
X_e := \mathbf{E} \left( x_e x_e^{\transp} \right) = \begin{pmatrix}
X & X_{x \xi} \\
X_{\xi x} & X_{\xi}
\end{pmatrix} \ \textnormal{ with } \ x_e := \begin{pmatrix}
x(t)\\
\xi(t)
\end{pmatrix}
\end{align*}
is then determined by 
\begin{align*}
A_eX_e + X_eA_e^{\transp} = -B_e B_e^{\transp},
\end{align*} 
where $X$ is the steady-state covariance matrix of $x(t)$ and
\begin{align*}
A_e := \begin{pmatrix}
A & B \\
0 & -I
\end{pmatrix}, \quad B_e := \begin{pmatrix}
0\\
I
\end{pmatrix}.
\end{align*}


In our numerical experiments, we choose $n=20$ masses and assume that only one-point correlations are available, i.e. the known entries are given by the diagonal of $X$. The steady-state covariance matrix can be partitioned as
\begin{align*}
X = \begin{pmatrix}
X_{pp} & X_{pv}\\
X_{vp} & X_{vv}
\end{pmatrix},
\end{align*}
where $X_{pp}$ and $X_{vv}$ are the covariance matrices of the positions and the velocities, respectively. To visualize the effects of using different low-rank inducing norms in~\cref{opt:compl_cov_r}, an interpolated colormap of the reconstructed $\hat{X}_{pp}$ and $\hat{X}_{vv}$ is used (see~\Cref{fig.MSDcompletion}). \begin{figure}[t]
	\pgfplotsset{
		colorbar shift/.style={xshift=0cm}
	}
	\begin{center}
		\def\factor{.25}%
		\begin{tikzpicture}
		\begin{groupplot}[xmin = 1,xmax =20,
		ymin = 1,ymax =20,
		ytick = {16,11,6,1},
		yticklabels = {5,10,15,20},
		colormap/jet,
		every colorbar/.append style={
			yticklabel style={
				text width=1.75 em,
				align=left,
				/pgf/number format/.cd,
				fixed,
				fixed zerofill,
				precision=2}},
		group style={
			group name=left plots,
			group size=2 by 1,
			horizontal sep= 2.5cm,
			group name=my plots},
		width=\factor \textwidth,
		height=\factor \textwidth,
		scale only axis,
		subtitle/.style={title=\gpsubtitle{#1}},
		title style={at={([yshift=-5ex]0.5,0)},anchor=north}
		]
		\nextgroupplot[axis equal image,colorbar,colorbar style ={ytick ={.5,1,1.5,2},yticklabel style={
				/pgf/number format/.cd,
				fixed,
				fixed zerofill,
				precision=1}},point meta min=0.0238,
		point meta max=2.4747]
		\addplot[thick,blue] graphics[xmin=1,ymin=1,xmax=20,ymax=20] {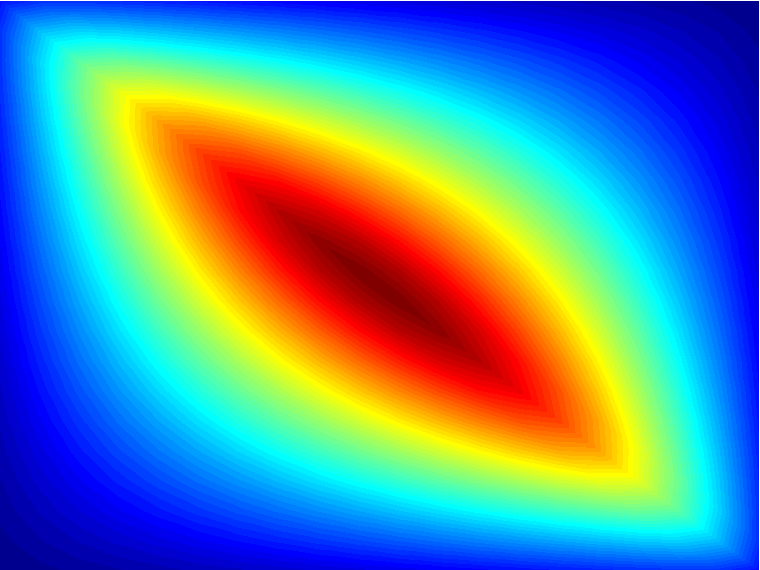};
		\addplot[line width = 1.5pt, black, domain = 0:21] (x,21-x); \label{cov_compl_MSD_black}
		\coordinate (nl) at (current axis.north);
		\nextgroupplot[axis equal image,colorbar,colorbar style ={ytick ={-.1,-.05,0,.05,.1}},point meta min=-0.1260,
		point meta max= 0.1443]
		\addplot[thick,blue] graphics[xmin=1,ymin=1,xmax=20,ymax=20] {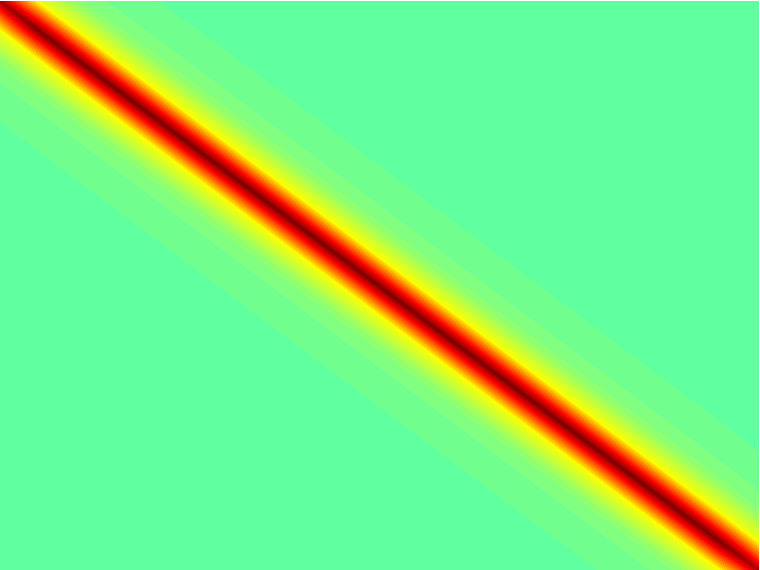};
		\addplot[line width = 1.5pt, black, domain = 0:21] (x,21-x); \coordinate (nl) at (current axis.north);
		\end{groupplot}
		\node[text width=5cm ,align=center,anchor=north] at ([yshift=-2mm]my plots c1r1.south) {\subcaption{$X_{pp}$ \label{fig.Sigmapp}}};%
		\node[text width=5cm,align=center,anchor=north] at ([yshift=-2mm]my plots c2r1.south) {\subcaption{$X_{vv}$ \label{fig.Sigmavv}}};%
		\end{tikzpicture}%
	\end{center}
	\caption{Interpolated colormap of the steady-state covariance matrices $X_{pp}$ and $X_{vv}$ of the positions and the velocities in the MSD system with $n=20$. \ref{cov_compl_MSD_black} indicates the available one-point correlations.}\label{fig.Sigma_pp_vv}
\end{figure}
The interpolated colormap of the true covariance matrices is shown in~\Cref{fig.Sigma_pp_vv}, where the black lines indicate the known measured entries. 
\begin{figure}[tbhp]
	
	\def\factor{.39}%
	\begin{tikzpicture}
	\begin{groupplot}[group style={group name=my plots, group size=2 by 1,horizontal sep=2.5 cm},ymin=0.0,no markers, ymax = 0.6,width=\factor \textwidth]
	\nextgroupplot[xlabel=$r$,
	ylabel=$\dfrac{\|X-\hat{X}\|_F}{\|X\|_F}$,
	xmin = 1,
	grid = both,
	xmax = 40,
	xtick = {1,10,20,...,40},
	ytick = {0,0.1,0.2,0.3,0.4,0.5,0.6},
	scaled ticks=false, 
	tick label style={/pgf/number format/fixed}]
	\addplot[color = FigColor1,mark = none] file{compl_MSD_cov_error_rast.txt}; 
	\label{line:error_MSD_r*};
	\addplot[color = FigColor2,mark = none, dashed] file{compl_MSD_cov_error_max_rast.txt}; 
	\label{line:error_MSD_max_r*};
	\nextgroupplot[xlabel=$r$,
	ylabel=$\rk$,
	xmin = 1,
	grid = both,
	xmax = 40,
	ymax = 40,
	ymin = 1,
	xtick = {1,10,20,...,40},
	ytick = {1,10,20,30,40},
	scaled ticks=false, 
	tick label style={/pgf/number format/fixed}]
	\addplot[color = FigColor1,mark = none] file{rank_MSD_fro.txt}; 
	\label{line:rank_MSD_r*};
	\addplot[color = FigColor2,mark = none, dashed] file{rank_MSD_spec.txt}; 
	\label{line:rank_MSD_max_r*};
	\end{groupplot}
	\node[text width=5cm ,align=left,anchor=north] at ([yshift=-7mm]my plots c1r1.south) {\subcaption{Relative completion errors of: \newline
			\ref{line:error_MSD_r*} \cref{opt:compl_cov_r} with $\|\cdot\|_{g,r*}=\|\cdot\|_{\ell_2,r\ast}$\newline
			\ref{line:error_MSD_max_r*} \cref{opt:compl_cov_r} with $\|\cdot\|_{g,r*}=\normkyrast{\cdot}$
			\label{fig.err_rpath_MSD}}};%
	\node[text width=5cm ,align=left,anchor=north] at ([yshift=-7mm]my plots c2r1.south) 
	{\subcaption{Rank of the solutions to: \newline 
			\ref{line:rank_MSD_r*} \cref{opt:compl_cov_r} with $\|\cdot\|_{g,r*}=\|\cdot\|_{\ell_2,r\ast}$\newline
			\ref{line:rank_MSD_max_r*} \cref{opt:compl_cov_r} with $\|\cdot\|_{g,r*}=\normkyrast{\cdot}$
			\label{fig.err_gpath_MSD}}};%
	\end{tikzpicture}
	\caption{Relative errors and ranks of solutions to \cref{opt:compl_cov_r} with $\|\cdot\|_{g,r\ast}=\|\cdot\|_{\ell_2,r\ast}$ and $\|\cdot\|_{g,r*}=\normkyrast{\cdot}$.}
	\label{fig.err_rpath_gpath_MSD}
\end{figure}

\Cref{fig.err_rpath_gpath_MSD} displays the relative errors and the ranks of the estimates obtained by \cref{opt:compl_cov_r} for different low-rank inducing norms as functions of $r$.
The nuclear norm minimization ($r=1$), as visualized in~Figures~\ref{fig.Xpp_nuc}~and~\ref{fig.Xvv_nuc}, gives the same rank as both the low-rank inducing Frobenius- and spectral norms for $r=2$. However, the latter approaches give better completions. The low-rank inducing spectral norm outperforms the low-rank inducing Frobenius norm for all $r \geq 2$. In particular, $r = 9$ gives the best completion, with a solution of rank 10 (see~Figures~\ref{fig.Xpp_max_r9}~and~\ref{fig.Xvv_max_r9}). It is interesting that the solutions to \cref{opt:compl_cov_r} with $r = 10$ for both the low-rank inducing Frobenius- and spectral norms are of rank 10. By \cref{prop:opt_reg}, there are no better feasible rank-10 solutions that minimize the Frobenius- and spectral norms, respectively. The solution to \cref{opt:compl_cov_r} with the low-rank inducing Frobenius norm and $r=10$, is shown in~Figure~\ref{fig.Xpp_ropt} and \ref{fig.Xvv_ropt}. The solution to the low-rank inducing spectral norm with  $r=10$ looks identical to Figures~\ref{fig.Xpp_max_r9}~and~\ref{fig.Xvv_max_r9}. 
\begin{figure}[pbht]
	\begin{center}
		\def\factor{.22}%
		\begin{tikzpicture}
		\begin{groupplot}[xmin = 1,xmax =20,
		ymin = 1,ymax =20,
		ytick = {16,11,6,1},
		yticklabels = {5,10,15,20},
		scaled ticks=false, 
		colormap/jet,
		every colorbar/.append style={
			yticklabel style={
				text width=1.75 em,
				align=left,
				/pgf/number format/.cd,
				fixed,
				fixed zerofill,
				precision=2}},
		group style={
			group name=left plots,
			group size=2 by 4,
			horizontal sep= 2.7cm,
			vertical sep= 1.26cm,
			group name=my plots},
		width=\factor \textwidth,
		height=\factor \textwidth,
		scale only axis
		]
		\nextgroupplot[axis equal image,colorbar,colorbar style ={ytick ={.5,1,1.5,2},yticklabel style={
				/pgf/number format/.cd,
				fixed,
				fixed zerofill,
				precision=1}},point meta min=0.0238,
		point meta max=2.4747]
		\addplot[thick,blue] graphics[xmin=1,ymin=1,xmax=20,ymax=20] {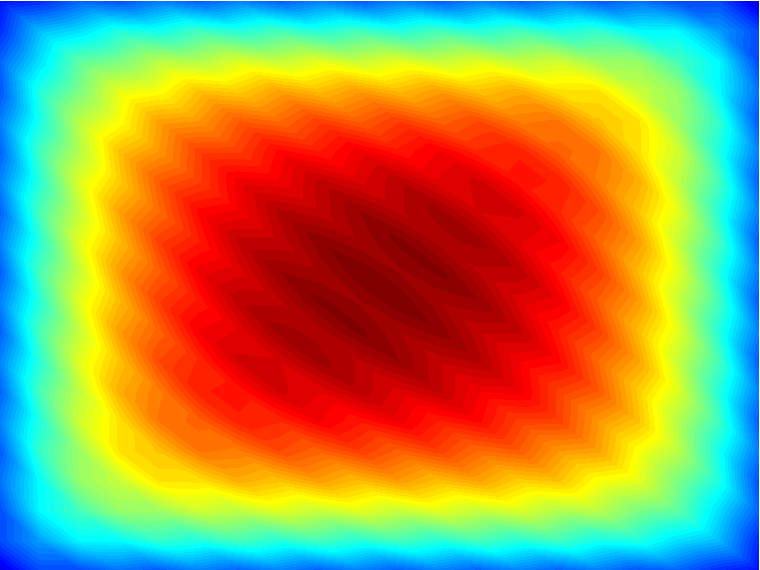};
		\coordinate (nl) at (current axis.north);
		\nextgroupplot[axis equal image,colorbar,colorbar style ={ytick ={-.1,-.05,0,.05,.1}},point meta min=-0.1260,
		point meta max= 0.1443]
		\addplot[thick,blue] graphics[xmin=1,ymin=1,xmax=20,ymax=20] {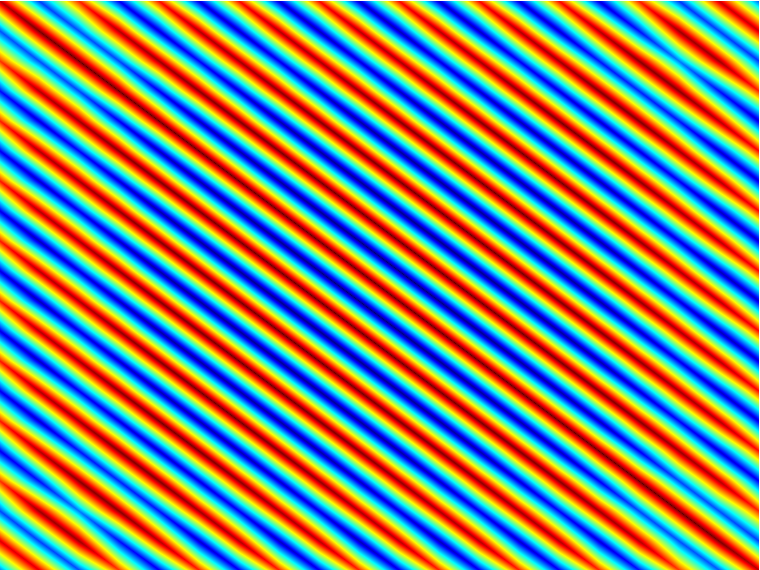};
		
		\nextgroupplot[axis equal image,colorbar,colorbar style ={ytick ={.5,1,1.5,2},yticklabel style={
				/pgf/number format/.cd,
				fixed,
				fixed zerofill,
				precision=1}},point meta min=0.0238,
		point meta max=2.4747]
		\addplot[thick,blue] graphics[xmin=1,ymin=1,xmax=20,ymax=20] {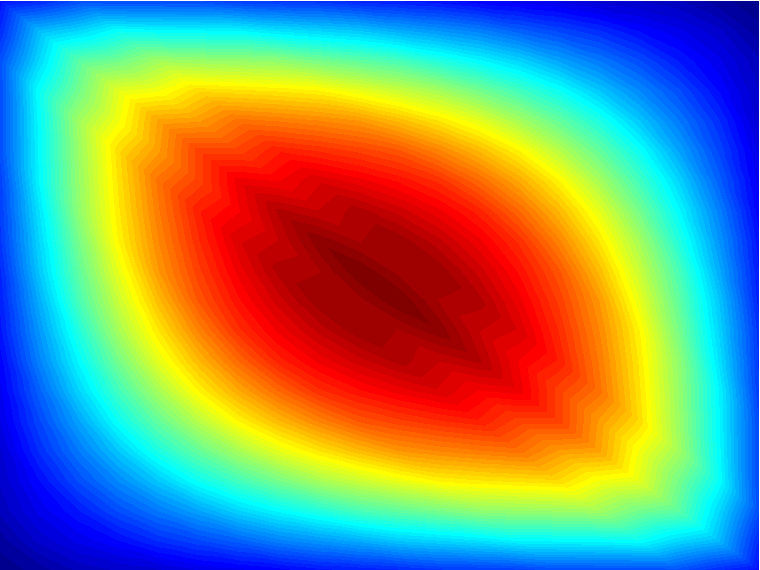};
		\coordinate (nl) at (current axis.north);
		\nextgroupplot[axis equal image,colorbar,colorbar style ={ytick ={-.1,-.05,0,.05,.1}},point meta min=-0.1260,
		point meta max= 0.1443]
		\addplot[thick,blue] graphics[xmin=1,ymin=1,xmax=20,ymax=20] {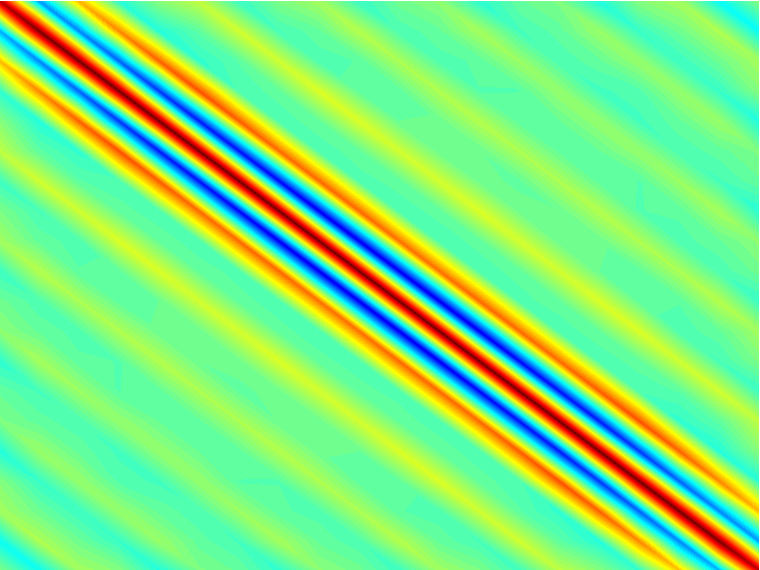};
		
		\nextgroupplot[axis equal image,colorbar,colorbar style ={ytick ={.5,1,1.5,2},yticklabel style={
				/pgf/number format/.cd,
				fixed,
				fixed zerofill,
				precision=1}},point meta min=0.0238,
		point meta max=2.4747]
		\addplot[thick,blue] graphics[xmin=1,ymin=1,xmax=20,ymax=20] {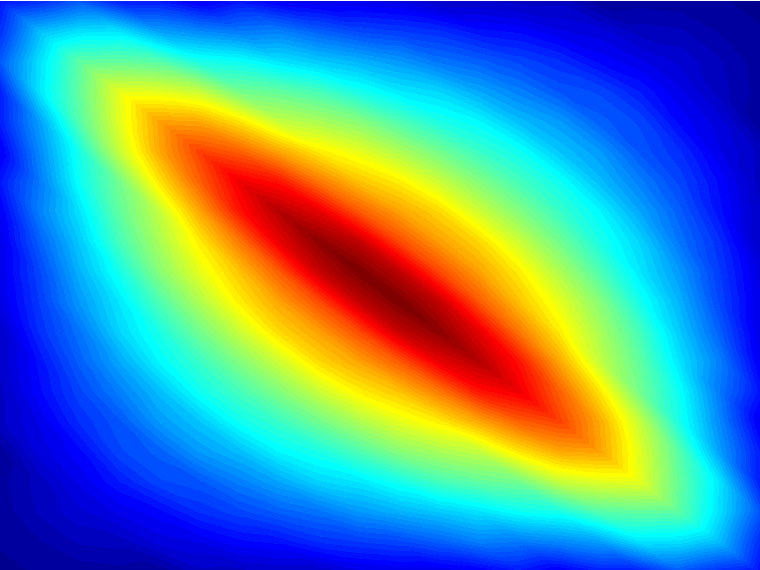};
		\coordinate (nl) at (current axis.north);
		\nextgroupplot[axis equal image,colorbar,colorbar style ={ytick ={-.1,-.05,0,.05,.1}},point meta min=-0.1260,
		point meta max= 0.1443]
		\addplot[thick,blue] graphics[xmin=1,ymin=1,xmax=20,ymax=20] {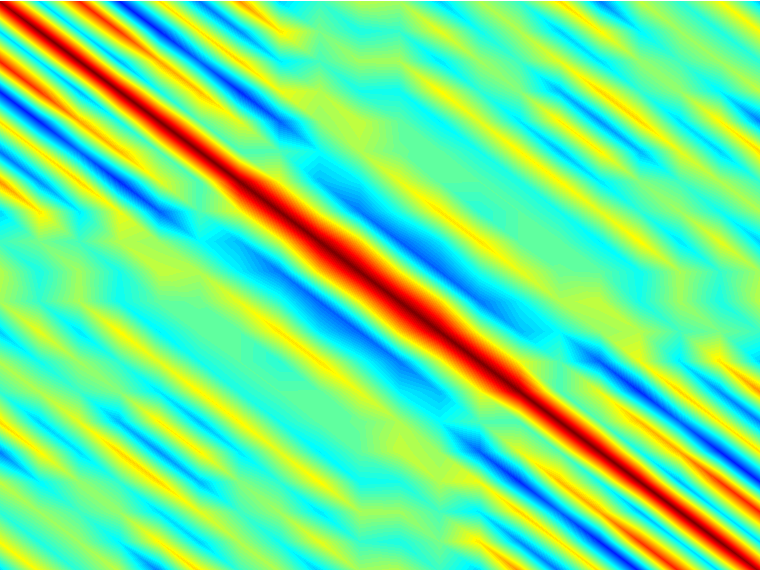};
		
		
		\end{groupplot}
		
		\node[text width=5cm ,align=center,anchor=north] at ([yshift=-2mm]my plots c1r1.south) {\subcaption{$\|\cdot\|_{g,r*}=\|\cdot\|_{\ell_1}$ in \cref{opt:compl_cov_r} \label{fig.Xpp_nuc}}};%
		\node[text width=5cm,align=center,anchor=north] at ([yshift=-2mm]my plots c2r1.south) {\subcaption{$\|\cdot\|_{g,r*}=\|\cdot\|_{\ell_1}$ in \cref{opt:compl_cov_r}\label{fig.Xvv_nuc}}};%
		
		\node[text width=5cm,align=center,anchor=north] at ([yshift=-2mm]my plots c1r2.south) {\subcaption{$\|\cdot\|_{g,r*}=\|\cdot\|_{10*}$ in \cref{opt:compl_cov_r}\label{fig.Xpp_ropt}}};%
		\node[text width=5cm,align=center,anchor=north] at ([yshift=-2mm]my plots c2r2.south) {\subcaption{$\|\cdot\|_{g,r*}=\|\cdot\|_{10*}$ in \cref{opt:compl_cov_r} \label{fig.Xvv_ropt}}};%
		\node[text width=5cm ,align=center,anchor=north] at ([yshift=-2mm]my plots c1r3.south) {\subcaption{$\|\cdot\|_{g,r*}=\|\cdot\|_{\ell_{\infty},9*}$ in \cref{opt:compl_cov_r}\label{fig.Xpp_max_r9}}};%
		\node[text width=5cm,align=center,anchor=north] at ([yshift=-2mm]my plots c2r3.south) {\subcaption{$\|\cdot\|_{g,r*}=\|\cdot\|_{\ell_{\infty},9*}$ in \cref{opt:compl_cov_r} \label{fig.Xvv_max_r9}}};%
		
		\node[text width=2cm,align=center,anchor=south, draw = none] at ([yshift= 0mm] my plots c1r1.north){};%

		\node[text width=5cm,align=center,anchor=north,above] at ([yshift=0mm]my plots c1r1.north) {$\hat{X}_{pp}$};
		\node[text width=5cm,align=center,anchor=north,above] at ([yshift=0mm]my plots c2r1.north) {$\hat{X}_{vv}$};
		\end{tikzpicture}%
		
	\end{center}
	\caption{Recovered covariance matrices of positions ($\hat{X}_{pp}$ to the left), and velocities ($\hat{X}_{vv}$ to the right), in the MSD system with $n=20$ masses resulting from problem~\cref{opt:compl_cov_r}, with different low-rank inducing norms.}\label{fig.MSDcompletion}
\end{figure} 

{{Similar to \Cref{fig:err_rastbest,fig:err_specbest}, it can been seen in \Cref{fig.err_rpath_MSD} that the preferred norm (here the spectral norm) significantly outperforms the other norm, even after dropping the rank constraint in \cref{prob:compl_cov_g}. This reveals the importance of having a variety of norm to choose from as well as having their low-rank inducing counterparts to obtain low rank. It lies outside of the scope of this paper to explain the physical relationship that motivates the choice of the spectral norm.}}

\section{Conclusion}
\label{sec:conclusion}
We have proposed a family of low-rank inducing norms and regularizers. These norms are interpreted as the largest convex minorizers of a unitarily invariant norm that is restricted to matrices of at most rank $r$. One feature of these norms is that optimality interpretations in the form of a posteriori guarantees can be provided. In particular, it can be checked if the solutions to a convex relaxation involving low-rank inducing norms, also solve an underlying rank constrained problem. Our numerical examples indicate that this is useful for, e.g. the so-called matrix completion problem. A suitably chosen low-rank inducing norm yields significantly better completion and/or lower rank than the commonly used nuclear norm approach. This has been demonstrated on the basis of what we called low-rank inducing Frobenius- and spectral norms. Both norms have cheaply computable proximal mappings and are shown to have simple SDP representations. The class of low-rank inducing norms can be further broadened by using real-valued $r \geq 1$, {{ in which case
\begin{align*}
\| Y \|_{g^D,r} &:= g^D(\sigma_1(Y),\dots,\sigma_{\lfloor r\rfloor}(Y), (r - \lfloor r\rfloor) \sigma_{\lfloor r\rfloor + 1}(Y) ),
\end{align*}
where $\lfloor r\rfloor$ denotes the floor function. Here $r$ can be considered a regularization parameter.}} 

Finally, note that \cref{eq:low_rank_prob_main} may also be approached through so-called non-convex splitting methods as discussed in~\cite{grussler2017local}. There, the proximal mapping of $f(\normAast{\cdot}{g,r})$ is replaced by the proximal mapping of $f(\|\cdot\|_g)+I_{\rk(\cdot) \leq r}$. The latter can be computed whenever the proximal mappings of either $f(\normAast{\cdot}{g,r})$ or $f(\normA{\cdot}{g})$ are known. 	

\appendix
	\counterwithin{prop}{section}
	\counterwithin{defn}{section}
	\counterwithin{cor}{section}
	\counterwithin{lem}{section}
	\counterwithin{algorithm}{section}
	\setcounter{prop}{0}
	\setcounter{defn}{0}
	\setcounter{lem}{0}
	\setcounter{cor}{0}
	\setcounter{algorithm}{0}

		\section{Proofs to Results in \Cref{sec:norms}}
		\label{subsubsec:rast-norm}

\subsection{Proof to \cref{lem:norm}}
\label{app:lem_norm_pf}
	\begin{proof}
		Let $1\leq r \leq q: = \minmn $, $g: \mathbb{R}^q \to \mathbb{R}_{\geq 0}$ be a symmetric gauge function, $\Sigma_j(M) := \diag(\sigma_1(M),\dots,\sigma_{j}(M))$ for $M \in \Rmn$, and $1\leq j \leq q$. Then for all \linebreak $Y \in \Rmn$ it holds that
		\begin{align*}
		\| Y \|_{g^D,r} &= \max_{\stackrel{\rk(M) \leq r}{\|M\|_g \leq 1}} \langle M, Y \rangle = \max_{\stackrel{\rk(\Sigma_q(M)) \leq r}{\|\Sigma_q(M)\|_g \leq 1}} \langle \Sigma_q(Y),\Sigma_q(M) \rangle\\
		 &= \max_{\|\Sigma_r(M)\|_{g} \leq 1} \langle \Sigma_r(Y),\Sigma_r(M) \rangle = \|\Sigma_r(Y)\|_{g^D},
		\end{align*}
		where the second equality follows by \cite[Corollary~7.4.1.3(c)]{horn2012matrix}. Further, $\| \cdot \|_{g^D,r}$ is unitarily invariant, since $$\|\Sigma_r(Y)\|_{g^D} = g^D(\sigma_1(Y),\dots,\sigma_r(Y))$$ defines a symmetric gauge function (see~\cref{thm:symmgauge}). Similarly to the above, this implies that
		\begin{align*}
		\| M \|_{g,r\ast}&=\max_{\|Y\|_{g^D,r} \leq 1} \langle M,Y\rangle= \max_{g^D(\sigma_1(Y),\dots,\sigma_r(Y)) \leq 1} \sum_{i=1}^{q} \sigma_i(M) \sigma_i(Y)\\
		&= \max_{g^D(\sigma_1(Y),\dots,\sigma_r(Y)) \leq 1} \left[ \sum_{i=1}^{r} \sigma_i(M) \sigma_i(Y) + \sigma_r(Y) \sum_{i=r+1}^{q} \sigma_i(M) \right].
		\end{align*}
It remains to prove \cref{eq:norm_ineq} and \cref{eq:rank_norm}. The constraint set for $r+1$ is a superset of the constraint set for $r$ and by the definition of $\|\cdot\|_{g^D,r}$ in~\cref{eq:firstass} it follows that $\|Y\|_{g^D,r}\leq\|Y\|_{g^D,r+1}$. Therefore,
\begin{align*} 
\|M\|_{g,r\ast} &= \max_{\|Y\|_{g^D,r}\leq 1} \langle M,Y\rangle\geq\max_{\|Y\|_{g^D,r+1}\leq 1} \langle M,Y\rangle=\|M\|_{g,(r+1)*}.
	\end{align*} 
Note that $\|\cdot\|_{g^D} = \|\cdot\|_{g^D,q}$, which implies that $\|\cdot\|_{g,q*}=\|\cdot\|_g$ and thus \cref{eq:norm_ineq} is proven. The implication in \cref{eq:rank_norm} follows from the derived expression for $\|\cdot\|_{g,r*}$, since for rank-$r$ matrices $M$, $\sigma_i(M)=0$ for all $i\in\{r+1,\ldots,q\}$.
\end{proof}

\subsection{Proof to \cref{prop:nuclear_norm_characterization}}
 \label{app:prop_nuclear_norm_characterization_pf}
Using \cite[Corollary~7.4.1.3(c)]{horn2012matrix} it is possible to see that $g^D(\sigma_1) = |\sigma_1|$ if and only if $g(\sigma_1) = |\sigma_1|$. Thus, \cref{eq:grast_explic} yields for all $M \in \Rmn$  that
\begin{align*}
\|M\|_{g,1*} = \max_{\sigma_1(Y) \leq 1} \sigma_1(Y) \sum_{i=1}^{\minmn} \sigma_i(M) =\|M\|_{\ell_{\infty}^D} = \|M\|_{\ell_1},
\end{align*}
where we use the fact that the dual norm of the spectral norm is the nuclear norm (see~\cite[Theorem~5.6.42]{horn2012matrix}).		
\subsection{Proof to \cref{lem:convhull}}
\label{app:lem_convhull_pf}
\begin{proof}
By definition of $\|\cdot\|_{g^D,r}$ in \cref{eq:firstass} in \Cref{lem:norm}, it holds that for all $Y\in \Rmn$
\begin{align*}
\max_{X\in\conv(E_{g,r})}\langle X,Y\rangle=\max_{\stackrel{\rk(X)\leq r}{\|X\|_{g^D}\leq 1}}\langle X,Y\rangle=\|Y\|_{g^D,r} =\max_{\|X\|_{g,r*}\leq 1}\langle X,Y\rangle=\max_{X\in B_{g,r\ast}^{1}}\langle X,Y\rangle.
\end{align*}
Since $\conv(E_{g,r})$ and $B_{g,r*}^1$ are closed convex sets, this equality can only be fulfilled if the sets are equal (see~\cite[Theorem~V.3.3.1]{hiriart2013convex}).

Next, we prove the decomposition. Since the decomposition trivially holds for $M =0$, we assume that $M \neq 0$ and define $\bar{M}:=\|M\|_{g,r*}^{-1}M$. Then $\bar{M}\in B_{g,r*}^1=\conv(E_{g,r})$ and therefore can be decomposed as
\begin{align*}
\textstyle \bar{M}=\sum_i\alpha_i\tilde{M}_i \quad \text{with} \quad \sum_i\alpha_i=1, \ \alpha_i\geq 0,
\end{align*}
where all $\tilde{M}_i$ satisfy 
\begin{align*}
\|\tilde{M}_i\|_g=\|\tilde{M}_i\|_{g,r*}=1 \quad \text{and} \quad \rk(\tilde{M}_i)\leq r,
\end{align*}
where the first equality is from \cref{eq:rank_norm} in \cref{lem:norm}. Defining $M_i:=\tilde{M}_i\|M\|_{g,r*}$ gives 
\begin{align*}
\textstyle M=\sum_{i}\alpha_iM_i \quad \text{with} \quad \rk(M_i)\leq r
\end{align*}
and $$\|M_i\|_{g}=\|M_i\|_{g,r*}=\|\|M\|_{g,r*}\tilde{M}_i\|_{g,r*}=\|M\|_{g,r*}\|\tilde{M}_i\|_{g,r*}=\|M\|_{g,r*}.$$ This concludes the proof.
\end{proof}

\subsection{Proof to \Cref{prp:rank_r_ind_str_conv}}
\label{app:prp_rank_r_ind_str_conv_pf}
\begin{proof}
Let $\bar{M}=\sum_{i}\alpha_i M_i$ with $M_i\in E_{g,r}$ and $\alpha_i\in(0,1)$, $\sum_{i}\alpha_i=1$ be a convex combination of points in $E_{g,r}$. Then, by assumption, $$\textstyle \|\bar{M}\|_g=\|\sum_{i}\alpha_i M_i\|_g<\sum_{i}\alpha_i\|M_i\|_g=\sum_{i}\alpha_i=1$$ and thus $\bar{M}\not\in E_{g,r}$. Since $\conv(E_{g,r})=B_{g,r*}^1$, this implies that $E_{g,r}$ is the set of extreme points of $B_{g,r*}^1$.
\end{proof}

\subsection{Proof to \Cref{prp:rank_r_ind_spectral}}
\label{app:prp_rank_r_ind_spectral}
\begin{proof}
Let us start by showing that $\conv(\mathcal{E}_r)=B_{\ell_{\infty},r*}^1$. Since $\| \cdot \|_{\ell_{1},r}$ and $\| \cdot \|_{\ell_{\infty},r}$ are dual norms to each other, it follows by \cref{lem:convhull} that 
\begin{align*}
\|Y\|_{\ell_{1},r} = \max_{X \in B_{\ell_{\infty},r*}^1}\langle X,Y\rangle
=\max_{\stackrel{\rk(X)=r}{1=\sigma_{1}(X)=\ldots=\sigma_{r}(X)}}\sum_{i=1}^r\sigma_i(X)\sigma_i(Y)=\max_{X\in \conv(\mathcal{E}_r)}\langle X,Y\rangle,
\end{align*}
where the last two equalities are a result of \cite[Corollary~7.4.1.3(c)]{horn2012matrix}. However, $\conv(\mathcal{E}_r)$ and $B_{\ell_{\infty},r*}^1$ are closed convex sets and therefore this equation can only hold if the sets are identical (see~\cite[Proposition~V.3.3.1]{hiriart2013convex}).

It remains to show that no point in $\mathcal{E}_r$ can be constructed as a convex combination of other points in $\mathcal{E}_r$. To this end, note that a necessary condition for $M\in\mathcal{E}_r$ is that $$\|M\|_F^2=\sum_{i=1}^{\minmn} \sigma_{i}^2(M)=\sum_{i=1}^r\sigma_{i}^2(M)=r.$$ Let $\bar{M}=\sum_{i}\alpha_iM_i$ be an arbitrary convex combination with $\alpha_i>0$ and $\sum_{i}\alpha_i=1$, of distinct points $M_i\in\mathcal{E}_r$. By the strict convexity of $\|\cdot\|_F^2$, it holds that 
\begin{align*}
\textstyle \|\bar{M}\|_F^2=\|\sum_{i}\alpha_iM_i\|_F^2<\sum_{i}\alpha_i\|M_i\|_F^2=r\sum_{i}\alpha_i=r.
\end{align*}
Hence, $\bar{M}\not\in\mathcal{E}_r$ and this concludes the proof.
\end{proof}

\bibliographystyle{plain}
\bibliography{refopt,refpos}

\end{document}